\definecolor{bfonce}{rgb}{0.,0.,0.8}	
\definecolor{bclair}{rgb}{0.87,0.92,1.}
\definecolor{orangec}{rgb}{1.,0.6,0.}
\newcommand{\dive}{{\mathrm{div}}}
\newcommand{\gradi}{{\boldsymbol \nabla}}
\newcommand{\bfu}{{\boldsymbol u}}
\newcommand{\bfr}{\boldsymbol r}
\newcommand{\bfx}{\boldsymbol x}
\newcommand{\xN}{\mathbb{N}}
\newcommand{\xR}{\mathbb{R}}
\newcommand{\ie}{{\em i.e.}}
\newcommand{\eg}{{\em e.g.}}
\newcommand{\bli}{\begin{list}{-}{\itemsep=1ex \topsep=1ex \leftmargin=0.5cm \labelwidth=0.3cm \labelsep=0.2cm \itemindent=0.cm}}
\title{Modelling of a spherical deflagration at constant speed}
\author{
D. Grapsas \thanks{Universit\'e d'Aix-Marseille ({\tt dionysis.grapsas@gmail.com})} \and
R. Herbin \thanks{Universit\'e d'Aix-Marseille ({\tt raphaele.herbin@univ-amu.fr})} \and
J.-C. Latch\'e \thanks{Institut de Radioprotection et de S\^{u}ret\'{e} Nucl\'{e}aire (IRSN) ({\tt jean-claude.latche@irsn.fr})} \and
Y. Nasseri \thanks{Universit\'e d'Aix-Marseille ({\tt youssouf.nasseri@univ-amu.fr})}
}
\begin{document}
\maketitle
\begin{abstract}
We build in this paper a numerical solution procedure to compute the flow induced by a spherical flame expanding from a point source at a constant expansion velocity, with an instantaneous chemical reaction.
The solution is supposed to be self-similar and the flow is split in three zones: an inner zone composed of burnt gases at rest, an intermediate zone where the solution is regular and the initial atmosphere composed of fresh gases at rest.
The intermediate zone is bounded by the reactive shock (inner side) and the so-called precursor shock (outer side), for which Rankine-Hugoniot conditions are written; the solution in this zone is governed by two ordinary differential equations which are solved numerically.
We show that, for any admissible precursor shock speed, the construction combining this numerical resolution with the exploitation of jump conditions is unique, and yields decreasing pressure, density and velocity profiles in the intermediate zone.
In addition, the reactive shock speed is larger than the velocity on the outer side of the shock, which is consistent with the fact that the difference of these two quantities is the so-called flame velocity, \ie\ the (relative) velocity at which the chemical reaction progresses in the fresh gases.
Finally, we also observe numerically that the function giving the flame velocity as a function of the precursor shock speed is increasing; this allows to embed the resolution in a Newton-like procedure to compute the flow for a given flame speed (instead of for a given precursor shock speed).
The resulting numerical algorithm is applied to stoichiometric hydrogen-air mixtures.
\end{abstract}
\begin{keywords} spherical flames, reactive Euler equations, Riemann problems \end{keywords}
%
%
\section{Problem position}

\begin{figure}[h!]
\begin{center}
\scalebox{0.9}{
\begin{tikzpicture} 
\fill[color=orangec!80!white,opacity=0.3] (1.7,0) arc (0:180:1.7);
\fill[color=bclair,opacity=0.3] (5.5,0) -- (3.5,0) arc (0:180:3.5) -- (-5.5,0) arc (180:0:5.5);
\draw[very thick, color=red!80!black] (1.7,0) arc (0:180:1.7);
\draw[very thick, color=bfonce] (3.5,0) arc (0:180:3.5);
\node at (0, 0.6){burnt zone}; \node at (0, 0.25){(constant state)};
\node at (0, 2.55){intermediate zone}; \node at (0, 2.2){(regular solution)};
\node at (0, 4.5){unburnt zone}; \node at (0, 4.15){(constant initial state)};
\node at (1, 1){$W_b$};
\node at (1.37, 1.37){$W_2$};
\node at (2.3, 2.3){$W_1$};
\node at (2.67, 2.67){$W_0$};
\draw[very thick, color=red!80!black] (-5.5,-1) -- (-4.3,-1); \node[anchor=west] at (-4.1, -1){reactive shock, $r=\sigma_r\,t$.};
\draw[very thick, color=bfonce] (-5.5,-1.4) -- (-4.3,-1.4); \node[anchor=west] at (-4.1, -1.4){precursor shock, $r=\sigma_p\,t$.};
\node[anchor=west] at (1., -1){$W=(\rho,u,p)$: local fluid state.};
\end{tikzpicture}
}
\caption{Structure of the solution.}
\label{Fig.1}
\end{center}
\end{figure}
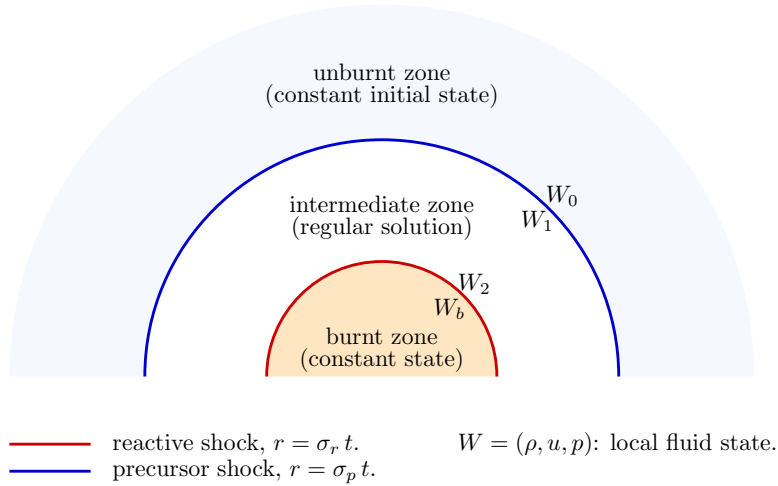

We address the flame propagation in a reactive infinite atmosphere of initial constant composition.
The ignition is supposed to occur at a single point (chosen to be the origin of $\xR^3$) and the flow is supposed to satisfy a spherical symmetry property: the density $\rho$, the pressure $p$, the internal energy $e$ and the entropy $s$ only depend on the distance $r$ to the origin and the velocity reads $\bfu=u \bfr/r$, where $\bfr$ stands for the position vector.
The flame is supposed to be infinitely thin and to move at a constant speed.
The flow is governed by the Euler equations, and we seek a solution with the following structure:
\bli
\item the solution is self-similar, \ie\ the quantities $\rho$, $p$, $e$, $s$ and $u$ are functions of the variable $x=r/t$ only.

\item the flow is split in three zones, referred to as the inner, intermediate and outer zones.
The inner zone stands for the burnt zone while, in the other two zones, the gas is supposed to be in its initial (referred to as fresh or unburnt) composition.
Burnt and fresh gases differ by the expression of the total energy:
\begin{equation}\label{eq:energies}
E = \frac 1 2  u^2 + e - \zeta_b\, Q,\quad \zeta_b=1 \mbox{ in the burnt zone, } \zeta_b=0 \mbox{ in the fresh zone,}
\end{equation}
with $Q >0$ the chemical heat reaction.
Both burnt and unburnt gases are considered as ideal gases, possibly with different heat capacity ratios: 
\[
p=(\gamma - 1) \rho e,\quad \gamma=\gamma_b \text{ for burnt gases},\ \gamma=\gamma_u \text{ for unburnt gases}.
\]

\item In the burnt zone, the solution is supposed to be constant; this constant state is denoted by $W_b=(\rho_b,u_b,p_b)$.
For symmetry reasons, the fluid is at rest in this zone, \ie\ $u_b=0$. 

\item The burnt and intermediate zones are separated by a shock, which coincides with the flame front.
This shock is called the reactive shock, and travels at a constant speed $\sigma_r$.
The outer state of the shock is denoted by $W_2=(\rho_2,u_2,p_2)$.
Note that the usual Rankine-Hugoniot jump conditions apply at the reactive shock, up to the fact that the expression of the total energy in the inner and outer states differ (see Equation \eqref{eq:energies}).

\item The intermediate and the outer zone are separated by a 3-shock, referred to as the precursor shock, and travelling at a velocity denoted by $\sigma_p$.
We denote by $W_1=(\rho_1,u_1,p_1)$ the inner state of the precursor shock, and, since the usual jump conditions for the Euler equations apply, we have $\sigma_p \geq u_1$.
In the outer zone, conditions are constant and equal to the initial condition $W_0=(\rho_0,u_0,p_0)$; the fluid is at rest, \ie\ $u_0=0$.

\item In the intermediate zones, the states $W_2$ and $W_1$ are supposed to be linked by a regular solution.
\end{list}
In addition, for physical reasons, we expect that
\begin{equation}\label{eq:uf}
u_2 >0 \mbox{ and } \sigma_r = u_2 + u_f \mbox{ with  }u_f >0.
\end{equation}
Indeed, the velocity $u_f$ is the velocity at which the chemical reaction progresses in the fresh gases; these are pushed away from the origin by the expansion of the burnt gases, and therefore $u_2 > 0$.

\medskip
The aim of this paper is to build a numerical procedure to compute a solution with the above described structure.
More precisely speaking, we present the two following developments:
\bli
\item First, for a given precursor shock speed $\sigma_p$, we derive a solution with the desired structure in a constructive way (and this construction yields a unique solution), and propose a simple numerical scheme to compute it.
Moreover, the constructed solution is such that the inequalities \eqref{eq:uf} are satisfied (in fact, we obtain that $\sigma_r - u_2 >0$ since we seek and find a solution such that $x-u(x) >0$ in the whole intermediate zone), and thus yields a physically meaningful flame velocity $u_f$.
\item As a by-product, we numerically obtain the velocity $u_f$ as a function of $\sigma_p$, \ie\ we construct a function $\widetilde {\mathcal G}$ such that $u_f=\widetilde {\mathcal G}(\sigma_p)$, and observe that this function $\widetilde {\mathcal G}$ is strictly increasing, which was expected from physical reasons (the faster the combustion, the stronger the generated shock-wave).
It is thus easy to build an iteration to compute the flow associated to a given $u_f$, which is generally the problem of physical interest.
\end{list}
Finally, this process is implemented in the free CALIF$^3$S software \cite{califs} developped at the french Institut de Radioprotection et de S\^uret\'e Nucl\'eaire (IRSN) and applied to obtain solutions as a function of $u_f$ for a stoichiometric mixture of hydrogen and air.

\medskip
The derivation of a solution for the same problem may be found in \cite{kuh-73-pre}; however, the techniques used in this latter paper are different (the solution is performed in the phase space), and the uniqueness of the construction together with the proof of the decreasing properties of the solution are not explicit.
The developments in \cite{kuh-73-pre} are built upon techniques developed for non-reactive problems in \cite{sed-45-onc,tay-46-air}.
Approximate solutions in closed form are given in \cite{gui-76-pre,cam-77-eco}, and extensions to accelerating flames may be found in \cite{str-79-bla,des-81-flo}.
Finally, the complete solution of the plane case (\ie\ the one-dimensional case in cartesian coordinates), in closed form, is given in \cite{bec-10-rea}.
%
%
\section{Solution for a given precursor shock speed}
We first propose a constructive derivation of the solution for a given precursor shock speed, and then state the numerical scheme to compute it.
%
%
\subsection{Derivation of the solution}
Since the fluid state in the outer zone $W_0$ is given, we may equivalently use hereafter either $\sigma_p$ or the precursor shock Mach number defined by $M_p=\sigma_p/c_0$, with $c_0=(\gamma_u\,p_0/\rho_0)^{1/2}$ the speed of sound in the outer zone.
We recall that, for entropy condition reasons, $M_p > 1$.
By a standard computation, the Rankine-Hugoniot conditions at the precursor shock yields the state $W_1$:
\begin{equation}\label{eq:state_1}
\rho_1= \frac{\gamma_u +1}{\gamma_u -1+\dfrac 2 {M_p^2}}\ \rho_0,
\quad u_1=(1-\frac{\rho_0}{\rho_1})\, \sigma_p,
\quad p_1=p_0+(1-\frac{\rho_0}{\rho_1})\,\rho_0\,\sigma_p^2.
\end{equation}
In addition, using the fact that the fluid is at rest in the burnt zone, we show in Appendix \ref{sec:RS} that State 2 must satisfy the following relation:
\begin{equation}\label{eq:rel_state_2}
\mathcal F_r(\sigma_r):=
\frac12 u_2^2 + \frac 1 {\gamma_b -1} u_2\, \sigma_r + \bigl( \frac{\gamma_u}{\gamma_u -1} - \frac{\gamma_b}{\gamma_b -1} \dfrac{\sigma_r}{\sigma_r - u_2} \bigr) \dfrac{p_2}{\rho_2} + Q=0,
\end{equation}
where $\sigma_r$ stands for the (unknown) reactive shock speed.
In addition, the same jump conditions give the burnt state $W_b$ as a function of $W_2$:
\begin{equation}\label{eq:state_b}
\rho_b=\rho_2 (\dfrac{\sigma_r - u_2}{\sigma_r}),\quad p_b = p_2 - \rho_2 u_2 (\sigma_r - u_2).
\end{equation}
To complete the derivation of the solution, we must now show that the following program makes sense: starting from $x=\sigma_p$, solve the Euler equations for $x \leq \sigma_p$ until the point $x=\sigma_r$ where Equation \eqref{eq:rel_state_2} is verified.
The solution at this point is equal to $W_2$ and Equations \eqref{eq:state_b} yield the burnt state $W_b$.
Let us now embark on this development.
%
%

\medskip
\paragraph{Governing equations in the intermediate zone}
Since we suppose that the solution is regular in this zone, we may replace the total energy balance in the Euler equations by the entropy equation, which, under the spherical symmetry assumption, yields the following system:
\begin{subequations}
\begin{align} \label{eq:euler_mass} &
\partial_t (r^2 \rho ) + \partial_r (r^2 \rho u ) =0,
\\ \label{eq:euler_mom} &
\partial_t (r^2\rho u) +  \partial_r (r^2(\rho u^2+p)) = 2r p,
\\ \label{eq:euler_ent} &
\partial _t (r^2\rho s ) + \partial _r (r^2\,\rho s u ) = 0.
\end{align}\end{subequations}
The mass balance equation \eqref{eq:euler_mass} may be developed to obtain:
\begin{equation} \label{eq:mass_nc}
\partial_t \rho + u\, \partial_r \rho + \rho\, \partial_r u + \frac 2 r \rho u=0.
\end{equation}
In addition, thanks to the mass balance equation and for a regular function $f=f(t,r)$, we have:
\[
\partial_t (r^2 \rho f) + \partial_r (r^2 \rho f u) = \rho\, (\partial_t f + \rho u\, \partial_r f).
\]
Using this identity in the momentum and entropy balances, \ie\ Equation \eqref{eq:euler_mom} and \eqref{eq:euler_ent} respectively, we get:
\begin{equation} \label{eq:mom_s_nc}
\begin{array}{l}\displaystyle
\partial_t u +  u\, \partial_r u + \frac 1 \rho\, \partial_r p = 0,
\\[1.5ex]
\partial_t s + u\, \partial_r s =  0.
\end{array}
\end{equation}
We now use the fact that, if a regular function $\varphi(t,r)$ only depends on $x=r/t$, which means that there exists $\tilde\varphi:\ \xR \rightarrow \xR$ such that $\tilde\varphi(x)=\varphi(t,r)$, we have
\[
\partial_t \varphi(t,r) = -\frac r {t^2}\, \tilde\varphi'(x) \mbox{ and } \partial_r \varphi(t,r) = \frac 1 t\, \tilde\varphi'(x).
\]
Since we look for a self-similar solution, we may apply this identity to \eqref{eq:mass_nc} and \eqref{eq:mom_s_nc}.
Keeping the same notation for functions of the pair $(t,r)$ and $x$ for short, we obtain the following system:
\begin{subequations}\label{eq:euler_ss}
\begin{align}\label{eq:mass_ss} &
\dfrac{-x + u}{\rho}\, \rho'(x) + u'(x) + \dfrac{2u(x)}{x} = 0,
\\ \label{eq:mom_ss} &
(-x +u(x))\, u'(x) + \dfrac 1 \rho(x)\, p'(x) = 0,
\\ \label{eq:ent_ss} &
(u(x)-x)\, s'(x)= 0.
\end{align}
\end{subequations}
Let us now suppose that $u <x$ in the intermediate zone.
Note that the fact that the precursor shock is a 3-shock implies that $u_1<\sigma_p$, so the assumed inequality is true in the outer boundary of the intermediate zone, and the assumption amounts to suppose that the intermediate zone ends (more precisely speaking, may be made to end in the construction of the solution) before $u=x$ occurs, which will be checked further.
The last relation thus implies that the entropy remains constant over the zone:
\begin{equation}
s=\frac p {\rho^{\gamma_u}} = s_1 = \frac {p_1} {\rho_1^{\gamma_u}},
\end{equation}
and this is a known value thanks to \eqref{eq:state_1}.
We thus have $p'=\gamma_u\, s_1\, \rho^{\gamma_u-1}\rho'$; using $c^2=\gamma_u\, p/\rho =\gamma_u\, s_1\, \rho^{\gamma_u-1}$, we thus get $p'=c^2\,\rho'$.
Substituting this expression in \eqref{eq:mass_ss}-\eqref{eq:mom_ss} and solving for $\rho'$ and $u'$, we get:
\begin{subequations}\label{eq:ode}
\begin{align}\label{eq:ode_rho} &
\rho'(x)=-\frac{2 u(x) (u-x)}{x\bigl((u(x)-x)^2-c(x)^2 \bigr)}\ \rho,
\\ \label{eq:ode_u} &
u'= \frac{2 c(x)^2}{x\bigl((u(x)-x)^2-c(x)^2 \bigr)}\ u.
\end{align}
\end{subequations}
This system of coupled ODEs is complemented by initial conditions, which consist in the data of the velocity and the density at the precursor shock, \ie\ at the outer boundary of the intermediate zone $x=\sigma_p$:
\begin{equation}
	\rho(\sigma_p)=\rho_1, u(\sigma_p)=u_1.
	\label{condini}	
\end{equation}

%
%

\bigskip
\paragraph{Existence, uniqueness and properties of the solution}
We begin by proving an {\em a priori} property of the solution, namely the fact that $\rho(x)$ and $u(x)$ are necessarily decreasing functions in the intermediate zone.
To this end, we will invoke the following easy lemma, which is a consequence of the mean value theorem.

\smallskip
\begin{lemma}\label{lmm:tl}
Let $h$ be a continuously differentiable real function, let us suppose that there exists $a > 0$ such that $h(a) > 0$, and that $h$ satisfies the property $h'(x) \leq 0$ if $h(x) > 0$.
Then $h(x) \geq h(a)$, for all $x \leq a$.
\end{lemma}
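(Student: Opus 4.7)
The plan is to proceed by a continuity/connectedness argument on the maximal interval, to the left of $a$, on which $h$ remains at least $h(a)$.

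Concretely, I would define
\[
T = \bigl\{ x \leq a : h(y) \geq h(a) \text{ for all } y \in [x,a] \bigr\},
\]
note that $T \ni a$, and let $b^\star = \inf T$. The lemma's conclusion is equivalent to $b^\star = -\infty$, so I would argue by contradiction and assume $b^\star > -\infty$.

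The first step is to check that $T$ is closed: if $(x_n) \subset T$ converges to some $x^\star$, then for any $y \in (x^\star, a]$ we have $y \in [x_n, a]$ for $n$ large enough so $h(y) \geq h(a)$, and passing to the limit (continuity of $h$) yields $h(x^\star) \geq h(a)$. Hence $b^\star \in T$, so $h(b^\star) \geq h(a) > 0$. The second step is to exploit the hypothesis locally around $b^\star$: by continuity, $h > 0$ on some open interval $(b^\star - \delta, b^\star + \delta)$, and the assumption then gives $h' \leq 0$ on this interval. Consequently $h$ is non-increasing there, which yields $h(x) \geq h(b^\star) \geq h(a)$ for all $x \in (b^\star - \delta, b^\star]$. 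Combined with $[b^\star, a] \subset T$, this shows that $(b^\star - \delta, a]$ is contained in $T$, contradicting $b^\star = \inf T$.

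There is essentially no real obstacle here; the only point requiring a little care is making sure the local monotonicity conclusion around $b^\star$ is genuinely valid, which follows cleanly from the mean value theorem applied on $[x, b^\star]$ for $x \in (b^\star - \delta, b^\star)$ (where $h' \leq 0$), giving $h(x) = h(b^\star) - h'(\xi)(b^\star - x) \geq h(b^\star)$. This is exactly the "mean value theorem" ingredient the authors point to in the statement.
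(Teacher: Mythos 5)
Your proof is correct, but it is organized differently from the paper's. The paper argues by contradiction starting from a hypothetical point $b<a$ with $h(b)<h(a)$: the mean value theorem produces $c\in(b,a)$ with $h'(c)>0$, whence $h(c)\le 0$ by the hypothesis, so $h$ has a zero in $[c,a]$; taking the \emph{last} such zero $\underline d$, one has $h>0$ and hence $h'\le 0$ on $(\underline d,a)$, so $h(a)-h(\underline d)=\int_{\underline d}^a h'\le 0$, contradicting $h(a)>0=h(\underline d)$. You instead run a continuity-method argument: you propagate the inequality $h\ge h(a)$ leftward from $a$ by showing that the set $T$ on which it holds is closed and open to the left (using that $h>0$, hence $h'\le 0$, on a neighbourhood of $\inf T$), so its infimum must be $-\infty$. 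Both arguments are elementary and both ultimately rest on the same local fact (where $h>0$, $h$ is non-increasing); your version has the minor advantage of making explicit the sign argument that the paper leaves implicit when it asserts $\int_{\underline d}^a h'(t)\,dt\le 0$ (namely that $h$ cannot change sign on $(\underline d,a)$ because $\underline d$ is the last zero), while the paper's version avoids introducing the auxiliary set $T$ and is slightly shorter. Either proof is acceptable.
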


\medskip
From the expression \eqref{eq:state_1}, we know that $0 < u_1 < \sigma_p$ and $\rho_1 >0$.
Let us now introduce $\sigma_\ell$ as the largest real number in $[0,\sigma_p)$ such that, for $x \in [\sigma_\ell,\sigma_p]$, $0 \leq u \leq x$ and $\rho \geq 0$.
Note that such a closed interval exists by the continuity (assumed in this zone) of $\rho$ and $u$. 
We are now in position to state the following result.

\smallskip
\begin{lemma}[Variations of the solution] \label{lmm:var}
Let us suppose that the pair $(\rho, u)$ satisfies \eqref{eq:ode}.
Then $\rho$ and $u$ are two decreasing functions over $[\sigma_\ell,\sigma_p]$.
Consequently, $\rho \geq \rho_1 >0$ and $u \geq u_1 >0$ over $[\sigma_\ell,\sigma_p]$.
\end{lemma}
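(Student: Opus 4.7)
The plan is to reduce both monotonicity claims to a single sign statement about the denominator appearing in the ODE system \eqref{eq:ode}, namely to the inequality $(u(x)-x)^2 - c(x)^2 < 0$ on the whole interval $[\sigma_\ell,\sigma_p]$. Once this denominator is known to be negative, the signs of $\rho'(x)$ and $u'(x)$ can be read off directly from \eqref{eq:ode_rho} and \eqref{eq:ode_u}: using that $u\geq 0$, $x>0$, $\rho\geq 0$ and $u\leq x$ on $[\sigma_\ell,\sigma_p]$, the numerator $-2u(u-x)\rho$ in \eqref{eq:ode_rho} is nonnegative and the numerator $2c^2 u$ in \eqref{eq:ode_u} is nonnegative, so both $\rho'$ and $u'$ are $\leq 0$.

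To control the denominator, I introduce the auxiliary function
\[
\phi(x):=c(x) - \bigl(x-u(x)\bigr),
\]
and aim to apply Lemma \ref{lmm:tl} to $h=\phi$ on the interval, decreasing from $a=\sigma_p$. The initial positivity $\phi(\sigma_p) = c_1 - (\sigma_p - u_1) > 0$ follows from the Lax entropy condition for the 3-shock, namely $\sigma_p < u_1 + c_1$. Note also that, since $u\leq x$ on $[\sigma_\ell,\sigma_p]$, the condition $\phi(x)>0$ is equivalent to $(u-x)^2 - c^2 < 0$, so wherever $\phi>0$ the sign analysis above is valid and yields $\rho'(x)\leq 0$ and $u'(x)\leq 0$.

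The main computation is then to bound $\phi'$ from above when $\phi>0$. Using the isentropy relation $c^2=\gamma_u\,s_1\,\rho^{\gamma_u-1}$ (a consequence of \eqref{eq:ent_ss} and the precursor shock data), differentiation gives $2cc' = (\gamma_u-1)\,c^2\,\rho'/\rho$, so $c'$ has the same sign as $\rho'$; in particular $c'(x)\leq 0$ whenever $\phi(x)>0$. Therefore
\[
\phi'(x) = c'(x) + u'(x) - 1 \leq -1 < 0 \qquad \text{when } \phi(x)>0,
\]
which is the hypothesis required by Lemma \ref{lmm:tl}. The lemma then yields $\phi(x)\geq \phi(\sigma_p)>0$ for every $x\in[\sigma_\ell,\sigma_p]$, and feeding this back into the sign analysis of the ODEs gives $\rho'\leq 0$ and $u'\leq 0$ throughout the interval. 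Since $\rho(\sigma_p)=\rho_1>0$ and $u(\sigma_p)=u_1>0$, decreasingness then implies $\rho(x)\geq \rho_1>0$ and $u(x)\geq u_1>0$ on $[\sigma_\ell,\sigma_p]$, completing the proof.

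The step I expect to be the main obstacle is the identification of the right auxiliary function and the derivation of the estimate $\phi'\leq -1$: the inequalities $c'\leq 0$ and $u'\leq 0$ that enter into it are themselves conditional on $\phi>0$, so the argument has a bootstrap flavour, and it is precisely Lemma \ref{lmm:tl} that makes the bootstrap rigorous. The rest of the reasoning is sign chasing in \eqref{eq:ode} and an application of the Lax condition.
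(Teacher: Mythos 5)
Your proof is correct and follows essentially the same route as the paper: the auxiliary function $\phi=c+u-x$ is exactly the paper's $h$, the bootstrap via Lemma \ref{lmm:tl} with the Lax condition providing $h(\sigma_p)>0$ is identical, and your bound $\phi'\leq -1$ is just a sharper phrasing of the paper's observation that $h$ is a sum of non-increasing functions. The only (inessential) omission is the paper's final remark that $u\geq u_1>0$ and $\rho\geq\rho_1>0$ upgrade $\rho'\leq 0$, $u'\leq 0$ to strict inequalities.
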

\begin{proof}
Let us consider the function $h: \xR_+ \in \xR$ defined by $h(x) = u(x) + c(x) - x$.
First, we remark that, by the Lax entropy condition, we have $h(\sigma_p)=u_1+c_1-\sigma_p > 0$ (and this property may be checked using the expressions \eqref{eq:state_1} of $W_1$).
Second, if $h(x)>0$, since by assumption  $u(x) \leq x$, $\rho \geq 0$ and $c \geq 0$, we have:
\[
(u(x)-x)^2-c(x)^2=(u(x)-x-c(x))\,(u(x)-x+c(x)) \leq 0.
\]
Equations \eqref{eq:ode_rho} and \eqref{eq:ode_u} thus readily imply that $\rho'\leq 0$ and $u' \leq 0$ since, still by assumption, $u \geq 0$ and $\rho \geq 0$.
The function $h$ is thus the sum of three non-increasing functions, and is hence non-increasing itself.
Lemma \ref{lmm:tl} applies, and yields $h(x) =u(x)-x-c(x) \geq h(\sigma_p) >0$ over the whole interval $[\sigma_\ell,\sigma_p]$, which in turn implies $\rho' \leq 0$ and $u' \leq 0$.
We thus have $\rho \geq \rho_1 >0$ and $u \geq u_1 >0$, which finally yields $\rho' < 0$ and $u' < 0$ over $[\sigma_\ell,\sigma_p]$.\\
\end{proof}

\medskip
Note that the inequality $h(x) \geq h(\sigma_p)> 0$ derived in this proof implies that the denominator in Equations \eqref{eq:ode_rho} and \eqref{eq:ode_u} does not vanish in the interval $[\sigma_\ell,\sigma_p]$.
The right-hand side of System \eqref{eq:ode} is thus a $C^\infty$ function of $\rho$, $u$ and $x$, and the existence and uniqueness of a solution follows by the Cauchy-Lipschitz theorem.
This result is stated in the following lemma.

\smallskip
\begin{lemma}[Existence and uniqueness of the solution]
There exists one and only one solution $(\rho, u)$ of System \eqref{eq:ode}-\eqref{condini} over the interval $[\sigma_\ell, \sigma_p]$. 
\end{lemma}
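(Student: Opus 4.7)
The plan is to read this as a direct application of the Cauchy--Lipschitz (Picard--Lindelöf) theorem, integrating the system \eqref{eq:ode} backward in the independent variable $x$ from the initial data \eqref{condini} at $x=\sigma_p$. The work done in Lemma \ref{lmm:var} has essentially already supplied the one ingredient beyond a routine citation, namely the nonvanishing of the denominator appearing in \eqref{eq:ode_rho}--\eqref{eq:ode_u}.

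First, I would eliminate $c$ from the right-hand side by invoking the isentropic relation $c^2=\gamma_u\, s_1\, \rho^{\gamma_u-1}$ established in the preceding subsection. This turns \eqref{eq:ode} into an autonomous-in-$(\rho,u)$ system of the form $(\rho',u')=F(x,\rho,u)$, where $F$ is a $C^\infty$ function of its arguments on the open set $\mathcal{O}=\{(x,\rho,u):\ x>0,\ \rho>0,\ (u-x)^2\neq c(\rho)^2\}$, and hence locally Lipschitz there. Since, by the Lax entropy condition at the precursor shock, $h(\sigma_p)=u_1+c_1-\sigma_p>0$ and $\rho_1>0$, the initial datum $(\sigma_p,\rho_1,u_1)$ lies in $\mathcal{O}$, and Cauchy--Lipschitz gives a unique maximal solution defined on some half-open interval $(\sigma^*,\sigma_p]$.

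Next, I would argue that $\sigma^*\leq \sigma_\ell$, so that the solution is defined on all of $[\sigma_\ell,\sigma_p]$. For this I would rerun the argument of Lemma \ref{lmm:var} on the maximal interval: as long as the solution satisfies $0\leq u\leq x$ and $\rho\geq 0$, we get $h\geq h(\sigma_p)>0$, hence
\[
(u-x)^2-c^2 = (u-x-c)(u-x+c) \leq -h(\sigma_p)^2 <0,
\]
together with the monotone bounds $\rho\geq \rho_1>0$ and $u\geq u_1>0$. These a priori estimates show that on the maximal interval the trajectory stays in a compact subset of $\mathcal{O}$, so the standard continuation criterion forbids the solution from ceasing to exist before $x=\sigma_\ell$ is reached. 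Uniqueness is immediate from the local Cauchy--Lipschitz statement.

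The only delicate point — and this is what I would flag as the main (though mild) obstacle — is a possible circularity: the definition of $\sigma_\ell$ presupposes that a solution already exists on $[\sigma_\ell,\sigma_p]$ with the sign constraints, while Lemma \ref{lmm:var} is invoked to obtain those very constraints. The clean way around this is a bootstrap: local existence near $\sigma_p$ is unconditional, the inequalities $0<u<x$ and $\rho>0$ are open and hold at $\sigma_p$, Lemma \ref{lmm:var} upgrades them to the strict monotone bounds on the current interval of existence, and the extension criterion then pushes the existence interval further to the left. Iterating (or equivalently taking the supremum of all $x$ for which the construction works) yields existence and uniqueness on the whole of $[\sigma_\ell,\sigma_p]$.
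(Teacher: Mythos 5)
Your proof is correct and follows essentially the same route as the paper: the inequality $h(x)\geq h(\sigma_p)>0$ from Lemma \ref{lmm:var} keeps the denominator in \eqref{eq:ode} bounded away from zero, the right-hand side is therefore a smooth function of $(x,\rho,u)$, and the Cauchy--Lipschitz theorem gives existence and uniqueness on $[\sigma_\ell,\sigma_p]$. The paper disposes of this in two sentences and does not address the circularity you flag (the definition of $\sigma_\ell$ presupposes the solution, while Lemma \ref{lmm:var} is only an \emph{a priori} estimate), so your bootstrap and continuation argument is a welcome extra degree of rigour rather than a genuinely different approach.
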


\medskip
In addition, Lemma \ref{lmm:var} allows to characterize $\sigma_\ell$.
Indeed, since $u \geq u_1 >0$ and $\rho \geq \rho_1 >0$ over $[\sigma_\ell,\sigma_p]$, by definition of $\sigma_\ell$, either $\sigma_\ell=0$ or $u(\sigma_\ell)=\sigma_\ell$.
Since $u \geq u_1 >0$ and $u(x)\geq x$ over $[\sigma_\ell,\sigma_p]$, the first option cannot hold, and we get
\begin{equation}\label{eq:sigmaL}
u(\sigma_\ell)=\sigma_\ell.
\end{equation}

\medskip
To complete the construction of a solution, it now remains to show the existence of a real number $\sigma_r$, \ie\ the fact that there exists $x \in (\sigma_\ell,\sigma_p)$ such that $W(x)=\bigl(\rho(x),u(x),p(x)\bigr)$ satisfies the condition $\mathcal F_r(x)=0$, where $\mathcal F_r$ is given in \eqref{eq:rel_state_2}:
\begin{equation}\label{eq:rel_state_2_x}
\mathcal F_r(x)=
\frac12 u(x)^2 + \frac 1 {\gamma_b -1} x\,u(x) + \bigl( \frac{\gamma_u}{\gamma_u -1} - \frac{\gamma_b}{\gamma_b -1} \dfrac{x}{x - u(x)} \bigr) \dfrac{p(x)}{\rho(x)} + Q=0.
\end{equation}
The existence of $\sigma_r$ is stated in the following lemma.

\smallskip
\begin{lemma}[Existence of $\sigma_r$]
The function $\mathcal F_r$ is defined and continuously differentiable over $(\sigma_\ell, \sigma_p]$,  $\mathcal F_r(\sigma_p)>0$ and $\lim_{x \rightarrow \sigma_\ell^+}\mathcal F_r(x)=-\infty$. 
Consequently, the set $\mathcal S_r=\{x \in (\sigma_\ell, \sigma_p) \mbox{ such that } \mathcal F_r(x)=0\}$ is a non-empty closed subset of $(\sigma_\ell, \sigma_p)$ which admits a maximal element $\sigma_r$. 
\end{lemma}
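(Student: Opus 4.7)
The plan is to verify the three displayed properties of $\mathcal{F}_r$ separately and then combine them through the intermediate value theorem. For the $C^1$ regularity on $(\sigma_\ell,\sigma_p]$, the only potentially problematic ingredient of $\mathcal{F}_r$ is the factor $x/(x-u(x))$, since $\rho$ is bounded below by $\rho_1>0$ (Lemma \ref{lmm:var}), $p=s_1\,\rho^{\gamma_u}$ inherits this regularity, and $(\rho,u)$ are smooth by the preceding existence and uniqueness lemma. Lemma \ref{lmm:var} also asserts $u'<0$, so $u$ is strictly decreasing on $[\sigma_\ell,\sigma_p]$; combined with $u(\sigma_\ell)=\sigma_\ell$ from \eqref{eq:sigmaL}, this gives $u(x)<\sigma_\ell<x$ and therefore $x-u(x)>x-\sigma_\ell>0$ on $(\sigma_\ell,\sigma_p]$. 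Hence $\mathcal{F}_r$ is of class $C^\infty$ on that half-open interval.

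For the left limit, the same bounds show that $\rho$, $u$ and $p$ extend by continuity to $\sigma_\ell$ with strictly positive values. Every contribution to $\mathcal{F}_r$ therefore remains bounded except the singular term $-\frac{\gamma_b}{\gamma_b-1}\,\frac{x}{x-u(x)}\,\frac{p(x)}{\rho(x)}$. Since $x-u(x)\to 0^+$ while $\frac{\gamma_b}{\gamma_b-1}\,\frac{p(x)}{\rho(x)}$ tends to a strictly positive limit, this singular term tends to $-\infty$, and so does $\mathcal{F}_r(x)$ as $x\to\sigma_\ell^+$.

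The main obstacle is verifying $\mathcal{F}_r(\sigma_p)>0$, because the coefficient of $p_1/\rho_1$ in the bracketed factor is typically negative: substituting from \eqref{eq:state_1} one finds $\sigma_p-u_1=(\rho_0/\rho_1)\,\sigma_p$, so that $\sigma_p/(\sigma_p-u_1)=\rho_1/\rho_0$ and the bracket reduces to $\frac{\gamma_u}{\gamma_u-1}-\frac{\gamma_b}{\gamma_b-1}\,\frac{\rho_1}{\rho_0}$, which is negative as soon as the compression ratio is large enough. The plan is to insert the full expressions \eqref{eq:state_1} for $W_1$ into $\mathcal{F}_r(\sigma_p)$, collect terms, and rewrite the result as an explicit function of $M_p$, $\gamma_u$, $\gamma_b$, $p_0/\rho_0$ and $Q$. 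One then exploits the positivity of the kinetic contribution $\frac12 u_1^2+\frac{\sigma_p u_1}{\gamma_b-1}$, the positivity of $Q$, and possibly the Lax inequality $(\sigma_p-u_1)^2<c_1^2=\gamma_u\,p_1/\rho_1$ already used in Lemma \ref{lmm:var}, in order to bound the negative contribution and conclude. This is the step where the sign of $Q$ enters in an essential way, and where the bulk of the algebra lives.

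With the three properties established, the intermediate value theorem applied to the continuous function $\mathcal{F}_r$ yields at least one zero in $(\sigma_\ell,\sigma_p)$, so $\mathcal{S}_r\neq\emptyset$. Moreover, since $\mathcal{F}_r(\sigma_p)>0$ and $\mathcal{F}_r(x)\to-\infty$ as $x\to\sigma_\ell^+$, there exist $\delta,\eta>0$ such that $\mathcal{F}_r<0$ on $(\sigma_\ell,\sigma_\ell+\delta]$ and $\mathcal{F}_r>0$ on $[\sigma_p-\eta,\sigma_p]$; the set $\mathcal{S}_r$ is thus contained in the compact interval $[\sigma_\ell+\delta,\sigma_p-\eta]$ and is closed as the preimage of $\{0\}$ under a continuous map. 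A non-empty closed bounded subset of $\xR$ admits a maximum, and this maximum is the desired $\sigma_r$.
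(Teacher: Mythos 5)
Your handling of the regularity of $\mathcal F_r$ on $(\sigma_\ell,\sigma_p]$ and of the limit $\mathcal F_r(x)\rightarrow -\infty$ as $x\rightarrow\sigma_\ell^+$ is correct and matches the paper's (terser) argument, and the closing topological step is fine. The genuine gap is the inequality $\mathcal F_r(\sigma_p)>0$. You correctly identify it as the delicate point --- the bracketed coefficient of $p_1/\rho_1$ is indeed negative, so that the paper's own one-line justification that $\mathcal F_r(\sigma_p)$ is ``an addition of positive terms'' is not literally accurate either --- but you then only announce a plan (``insert \eqref{eq:state_1}, collect terms, bound the negative contribution'') without carrying it out. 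As written, the central hypothesis of the lemma is left unproven, and the tools you propose (positivity of the kinetic terms, the Lax inequality) do not suffice: the negative term cancels the positive non-$Q$ terms almost exactly, so no crude one-sided bound can work; an exact identity is needed.

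The identity that closes the gap (for $\gamma_u=\gamma_b=\gamma$, the case treated in the paper's proof and numerics) is $\mathcal F_r(\sigma_p)=Q$. Indeed, writing $N$ for the sum of the non-$Q$ terms and using the mass jump $\rho_1(\sigma_p-u_1)=\rho_0\sigma_p$ and the momentum jump $p_1=p_0+\rho_0\,\sigma_p\, u_1$ at the precursor shock, one obtains
\begin{equation*}
N=\frac{u_1}{\gamma-1}\Bigl(\sigma_p-\frac{c_0^2}{\sigma_p}-\frac{\gamma+1}{2}\,u_1\Bigr),
\end{equation*}
and the expression \eqref{eq:state_1} of $u_1$ gives $\frac{\gamma+1}{2}\,u_1=\sigma_p(1-M_p^{-2})=\sigma_p-c_0^2/\sigma_p$, so that $N=0$ and $\mathcal F_r(\sigma_p)=Q>0$; this is precisely where the sign of $Q$ enters, as you anticipated. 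Your proof would be complete once this computation (or an equivalent use of the energy jump condition at the precursor shock) is supplied. Note finally that for $\gamma_u\neq\gamma_b$ the cancellation is no longer exact and the claim $\mathcal F_r(\sigma_p)>0$ requires a separate discussion, which neither your proposal nor the paper provides.
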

\begin{proof}
When $x$ tends to $\sigma_\ell$, we have seen that $u(\sigma_\ell)$ tends to $\sigma_\ell$ and thus $\mathcal F_r$ tends to $-\infty $.
When $x = \sigma_p$, thanks to the inequalities $u_1 >0$ and $\sigma_p - u_1 > 0$, we observe that $\mathcal F_r(\sigma_p)$ is an addition of positive terms:
\[
\mathcal F_r(\sigma_p)=\frac 1 2 u_1^2 + \frac{\sigma_p}{\gamma -1} u_1  + \frac{1}{\gamma -1} \big( 1 -  \dfrac{\sigma_p}{\sigma_p - u_1} \big) c_1^2 + Q > 0.
\]
\end{proof}

\medskip
In addition, when $\gamma_u=\gamma_b$, we are able to prove that $\mathcal F_r(\sigma_p)$ is an increasing function over $(\sigma_\ell, \sigma_p)$, and therefore the set $\mathcal S_r$ contains a single point; the proof of this result is given in Appendix \ref{sec:fr}.

\medskip
Finally, note that $\sigma_r > \sigma_\ell$; since $u$ is a decreasing function, this yields that $\sigma_r-u(\sigma_r)>0$.
As mentioned in \eqref{eq:uf}, this was expected, from a physical point of view, since this quantity is nothing else that the flame velocity $u_f$.
%
%
\subsection{Numerical approximation of the solution in the intermediate zone}
The problem tackled in this section is twofold: first, we need to solve numerically the system of ODEs \eqref{eq:ode}-\eqref{condini}, and second, to determine the speed of the reactive shock $\sigma_r$.
To this purpose, we solve \eqref{eq:ode}-\eqref{condini} by an explicit Euler scheme, starting at $N \in \xN$ and $x^N=\sigma_p$ and, for indices $n$ decreasing from $N$, performing steps of $-\delta x$, with $\delta x =\sigma_p/N$; at each new step $n$ associated to $x^n=n \delta x$, we obtain $W^n$ and we evaluate the function $\mathcal F_r$, until we obtain $\mathcal F_r(x^n) \leq 0$.
Here, the algorithm stops and we know that the computed approximation $\sigma^{\rm app}_r$ of $\sigma_r$ satisfies $x^n < \sigma^{\rm app}_r < x^{n+1}$; for $\delta x$ small enough, $x^{n+1}$ may thus be considered as a reasonable approximation of $\sigma_r$; this is indeed the way it is computed in the numerical experiments described below.

\medskip
The scheme thus reads:
\begin{equation}\label{eq:ode_scheme}
\begin{array}{l}
\mbox{for } n=N,\ u^N = u_1,\ \rho^N = \rho_1,
\\[3ex]
\mbox{for } n=N-1 \mbox{ to } 0  \mbox{ and while } \mathcal F_r(x^{n+1}) > 0,
\\[2ex] \displaystyle \hspace{5ex}
(c^{n+1})^2 = \gamma \, s_1 \, (\rho^{n+1})^{\gamma_u-1},
\\[2ex] \displaystyle \hspace{5ex}
\rho^n = \rho^{n+1 }+ \delta x\ \dfrac{2\,u^{n+1}\,(u^{n+1}-x^{n+1})}{x^{n+1}\,\bigl( (u^{n+1}-x^{n+1})^2 - (c^{n+1})^2 \bigr)}\ \rho^{n+1},
\\[3ex] \displaystyle \hspace{5ex}
u^n = u^{n+1} - \delta x\ \dfrac{2\,(c^{n+1})^2}{x^{n+1}\,\bigl( (u^{n+1}-x^{n+1})^2 - (c^{n+1})^2 \bigr)} u^{n+1}.
\end{array}
\end{equation}
Then, for any valid value of $n \leq N$, the pressure is given by
\[
p^n = s_1\,(\rho^n)^{\gamma_u}.
\]
Since the algorithm stops as soon as $\mathcal F_r(x^{n+1})$ becomes negative, from the expression of this latter function, we have $u^n < x^n$ in all the performed steps $n$.
We thus have $u^n > 0$, $\rho^n > 0$, $ u^n \geq u^{n+1}$ and $\rho^n \geq \rho^{n+1}$ at all steps.
%
%
\section{Solution for a given flame speed}
The construction performed in the previous section shows that, to any precursor shock velocity $\sigma_p$ greater than the speed of sound $c_0$ in the outer zone of the fresh atmosphere, we are able to associate a positive flame velocity $u_f$ given by $u_f=\sigma_r-u_2$.
In addition, even if we have no proof, physical arguments suggest that $u_f$ is an increasing function of $\sigma_p$ (or  equivalently of the Mach number $M=\sigma_p/c_0$, considering a family of problems with the same initial atmosphere and thus $c_0$ as a fixed parameter); this behaviour is confirmed by numerical experiments (see Section \ref{sec:num}).
Computing the flow for a given $u_f$, which is in fact usually the engineering problem to be tackled, amounts to invert the function $u_f =\widetilde {\mathcal G}(M)$, and this equation for $M$ thus should have one and only one solution, at least for reasonable values of $u_f$.
To compute this solution, we define $\mathcal G$ by
\begin{equation}
	\label{nu}
	\mathcal G(M): =   \widetilde {\mathcal G} (M) - u_f
\end{equation}
and search for $M$ such that $\mathcal G(M)=0$ with the following iterative algorithm depending on the parameters $M_0$, $\delta$ and $\epsilon$:
\[
\begin{array}{ll}
\mbox{initialization:}
&
\mbox{let } M_0 \mbox{ be given, and compute } \mathcal G(M_0),
\\[1ex] &
\mbox{let } M_1=M_0 + \delta,\mbox{ and compute } \mathcal G(M_1)
\\[2ex]
\mbox{current iteration:}
&
\text{ For } k \geq 2, \mbox{ let }M_k = M_{k-1} - \dfrac{M_{k-1} - M_{k-2}}{\mathcal G(M_{k-1}) - \mathcal G(M_{k-2})} \mathcal G(M_{k-1}),
\\ &
\mbox{ and compute } \mathcal G(M_k).
\\[2ex]
\mbox{stopping criteria:}
&
\mbox{ stop when } \mathcal G(M_k) \leq \epsilon.
\end{array}
\]
This algorithm is used in the following section with $M_0=1.0001$, $\delta=0.001$ and $\epsilon=10^{-5}$.
Convergence is obtained for all cases, provided that the number of cells used in the numerical computation of the solution in the intermediate zone is large enough; otherwise, the error on $\sigma_r$ is too large and the prescribed tolerance threshold for the value of $\mathcal G$ cannot be reached.

%
%

\begin{figure}[tb]
\scalebox{0.9}{
\begin{tikzpicture} 
\node[anchor=east] at (10., 10){\includegraphics[scale=0.58]{./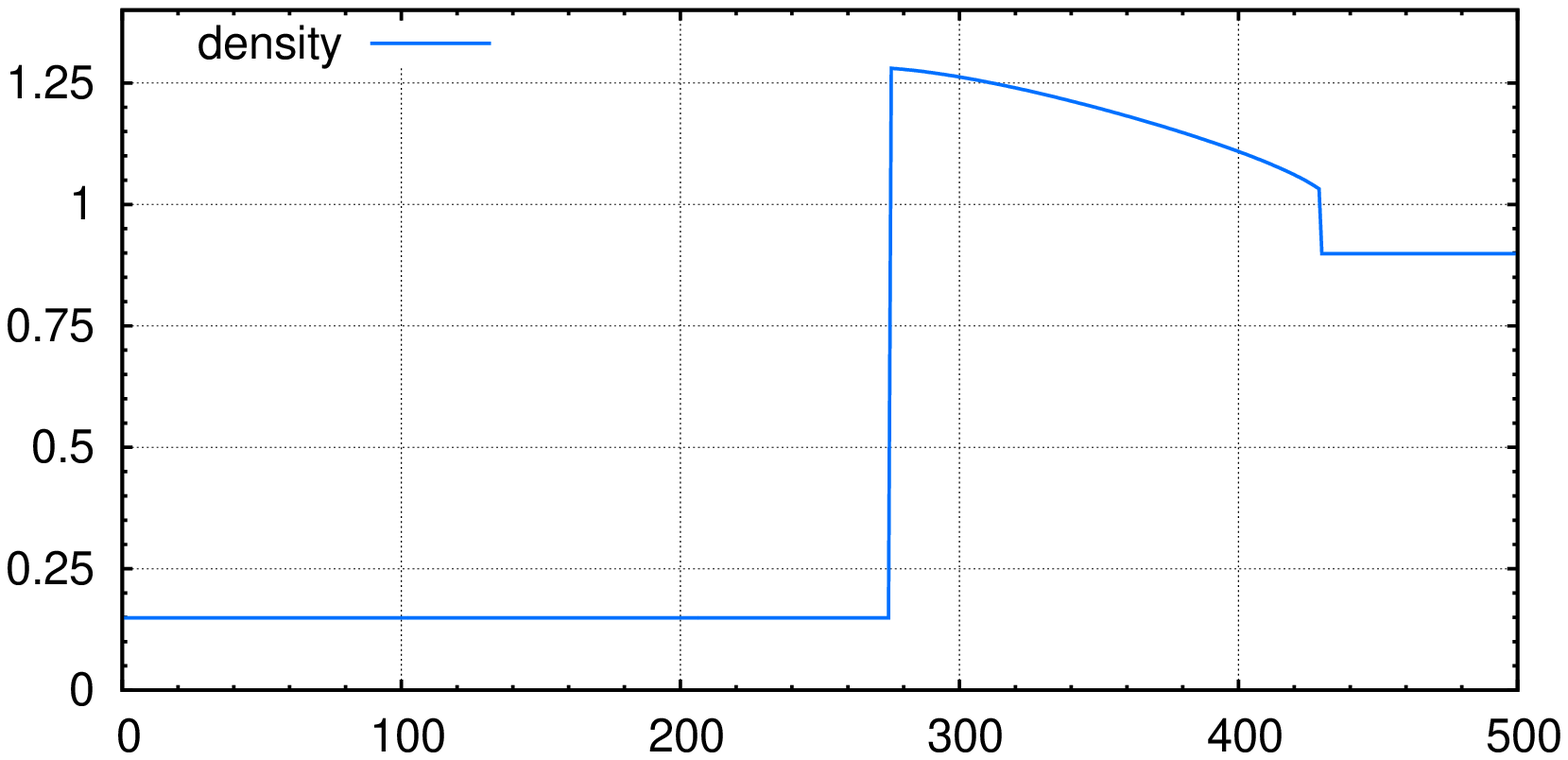}};
\node[anchor=east] at (10., 5){\includegraphics[scale=0.57]{./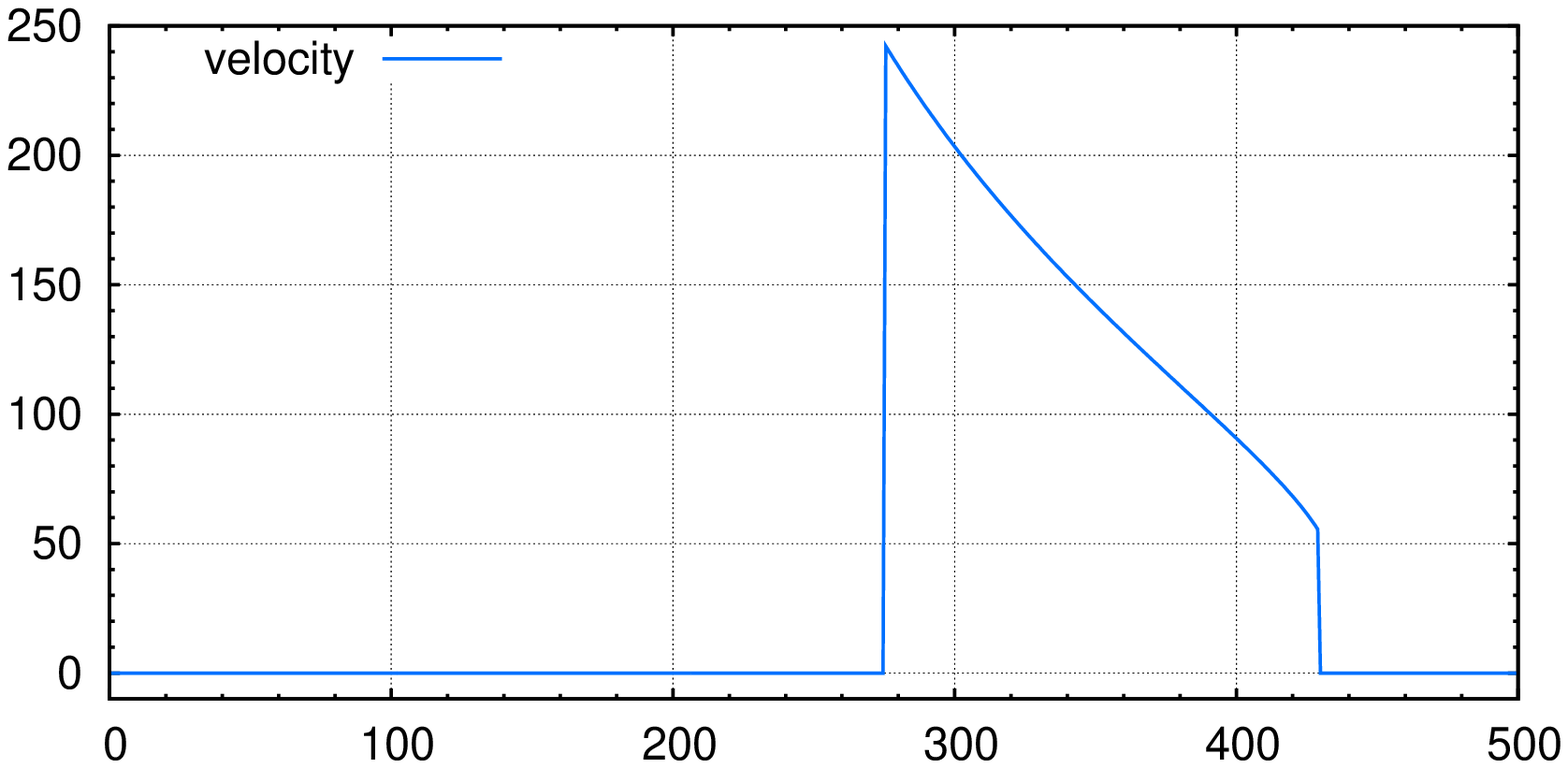}};
\node[anchor=east] at (10., 0){\includegraphics[scale=0.6]{./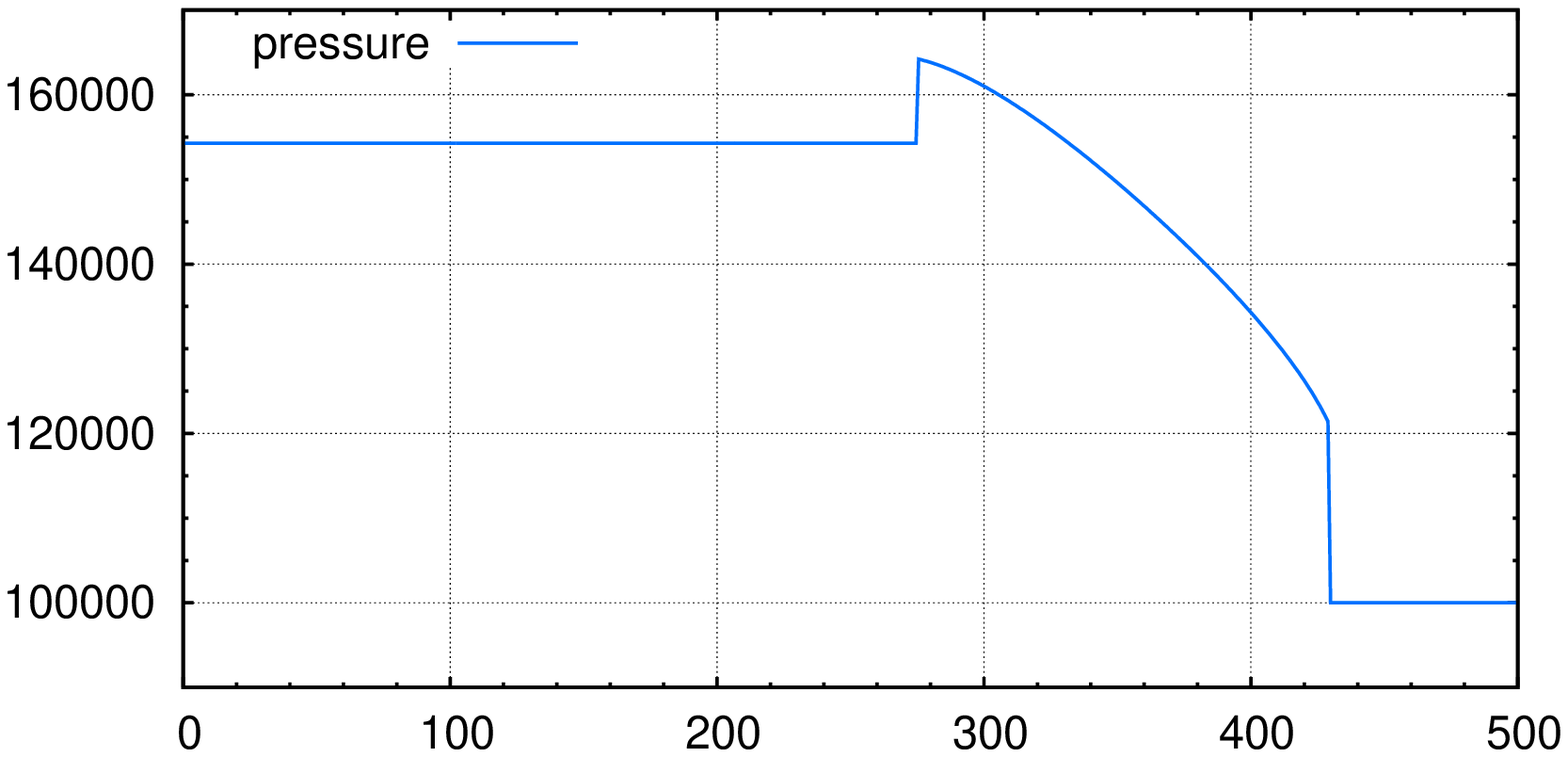}};
\end{tikzpicture}
}
\caption{Density ($\mathrm{kg}\,\mathrm{m}^{-3}$), velocity ($ \mathrm{m}\,\mathrm{s}^{-1}$) and pressure ($\mathrm{Pa}$) profiles obtained for a flame velocity of $u_f=32$\,m/s.}
\label{fig:uf=32} 
\end{figure}

\begin{figure}[tb]
\scalebox{0.9}{
\begin{tikzpicture} 
\node[anchor=east] at (10., 10){\includegraphics[scale=0.58]{./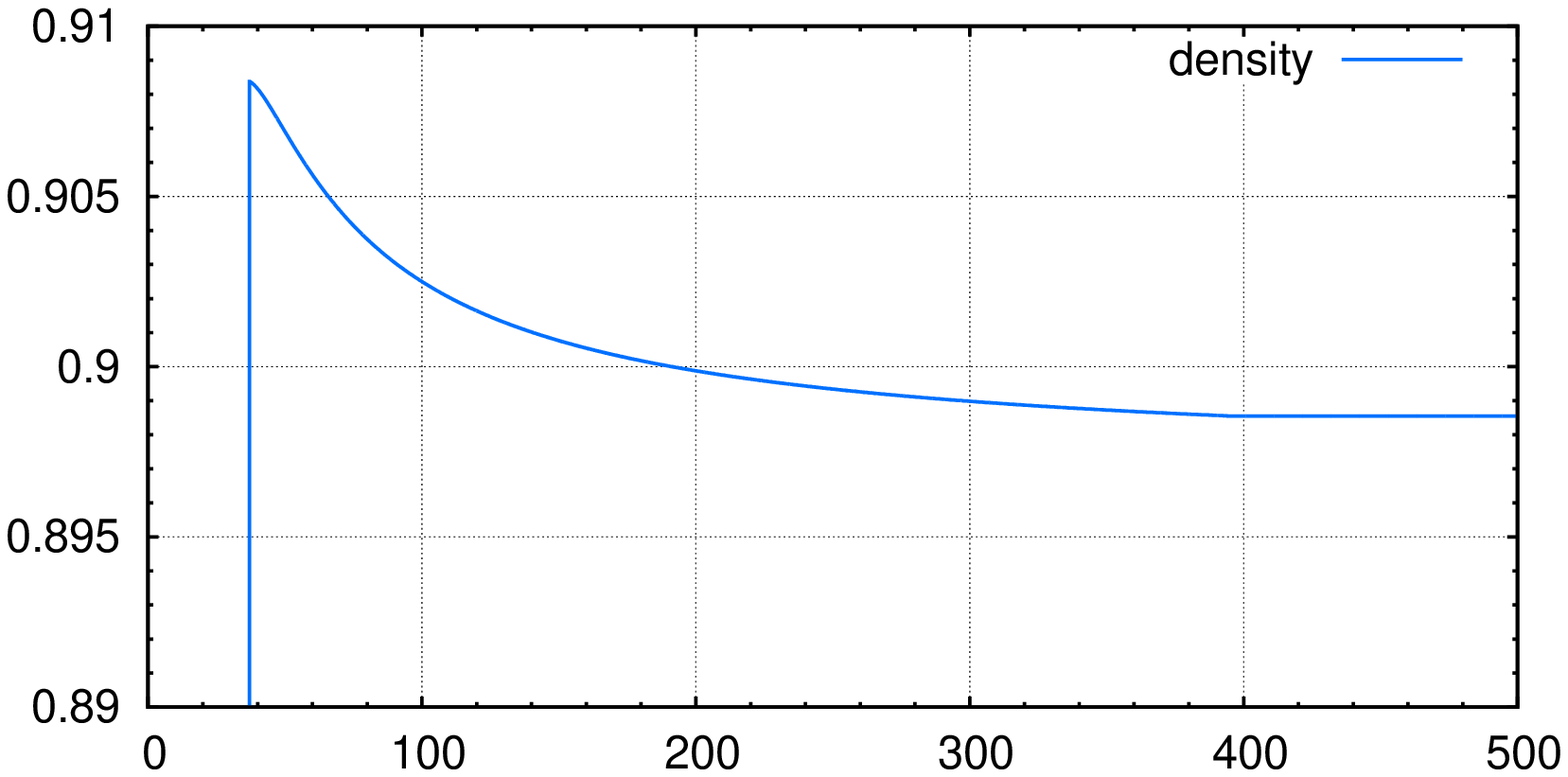}};
\node[anchor=east] at (10., 5){\includegraphics[scale=0.57]{./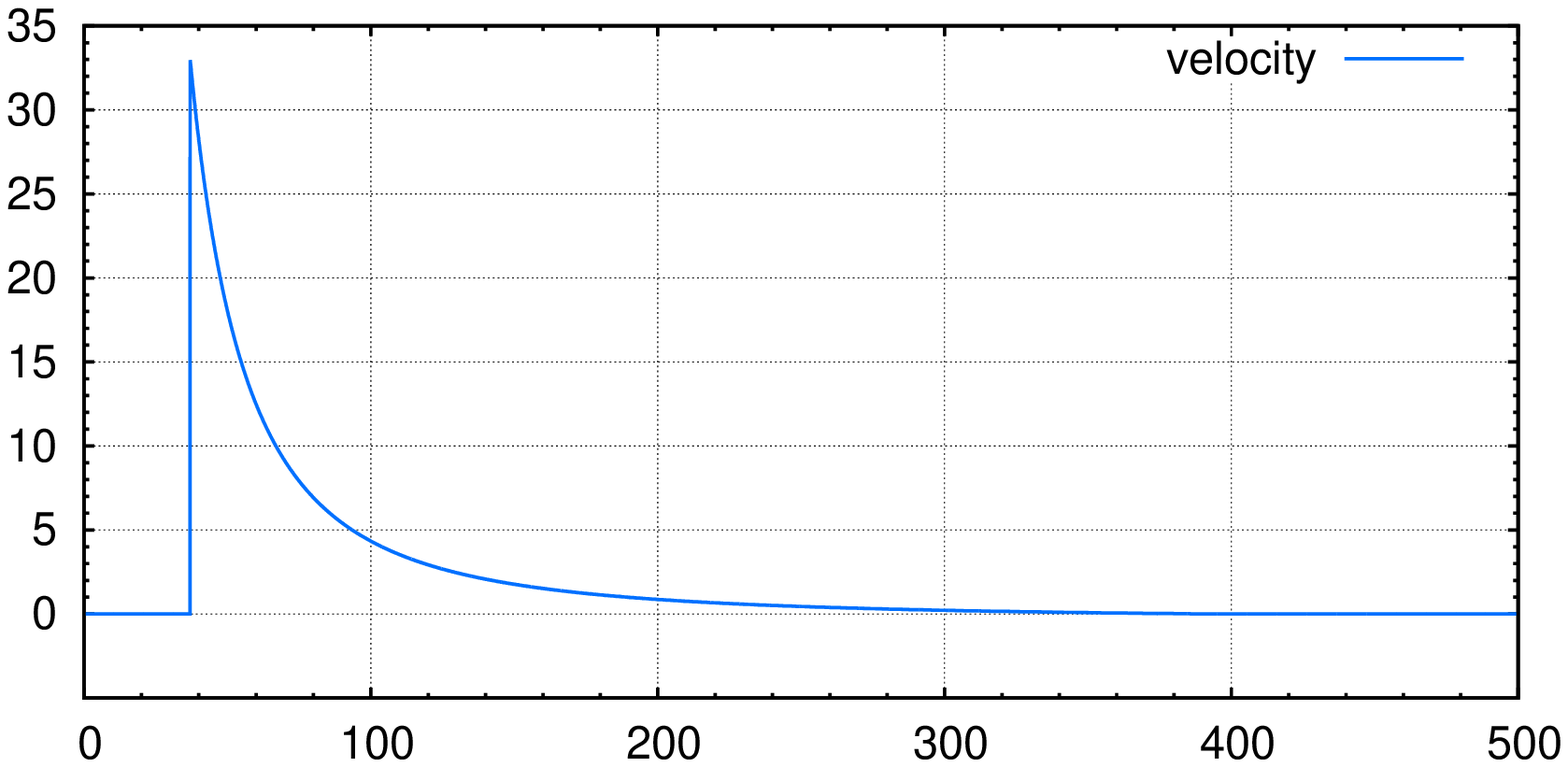}};
\node[anchor=east] at (10., 0){\includegraphics[scale=0.6]{./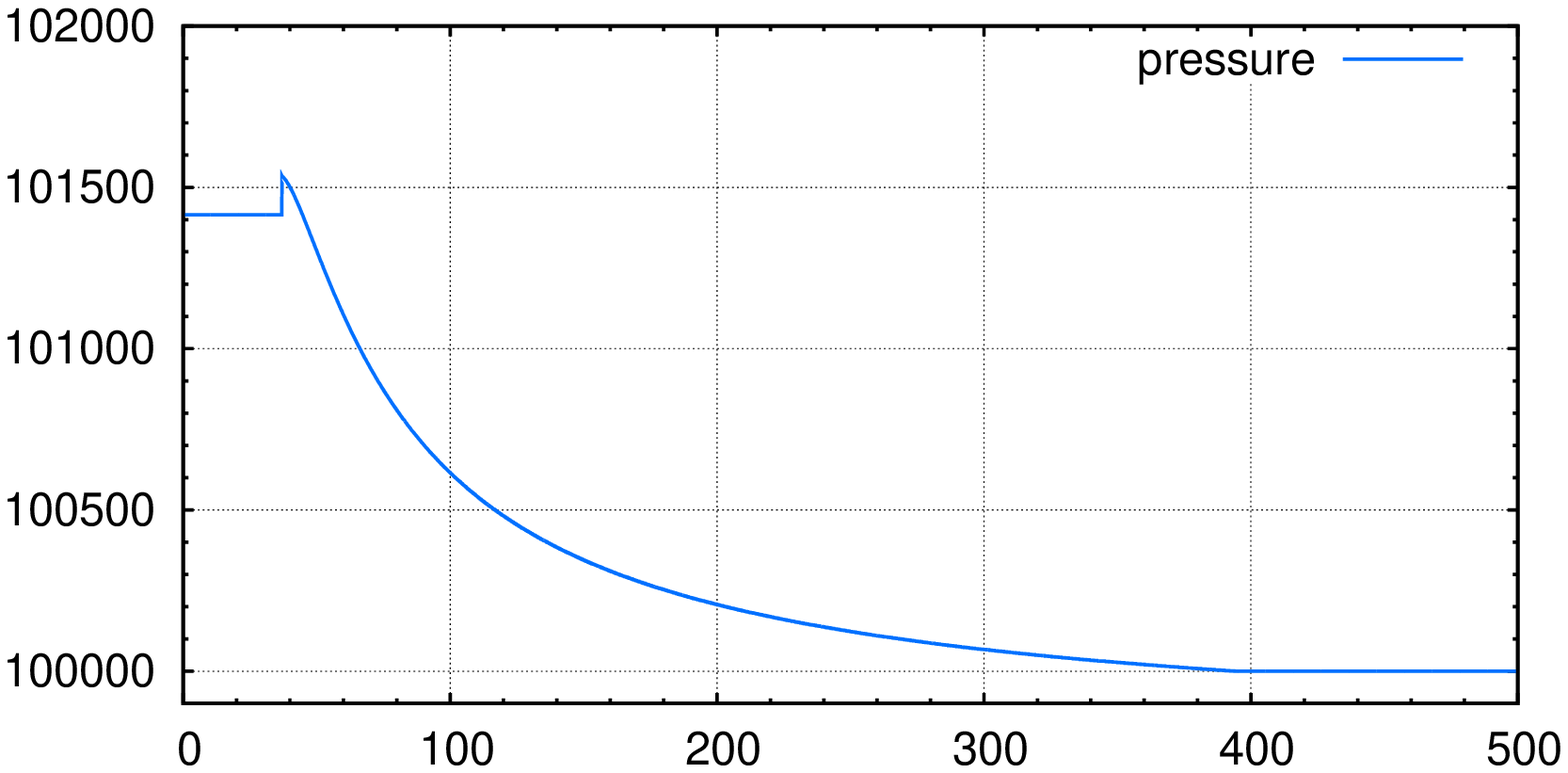}};
\end{tikzpicture}
}
\caption{Density ($\mathrm{kg}\,\mathrm{m}^{-3}$), velocity ($ \mathrm{m}\,\mathrm{s}^{-1}$) and pressure ($\mathrm{Pa}$) profiles obtained for a flame velocity of $u_f=4$\,m/s.}
\label{fig:uf=4} 
\end{figure}

\section{Application to hydrogen deflagrations}\label{sec:num}
We now apply the developed procedure for a flame propagating in a stoichiometric mixture of hydrogen and air.
We consider a unique total and irreversible chemical reaction, which reads:
\[
2\, \mathrm{H}_2 + \mathrm{O}_2 \longrightarrow 2\, \mathrm{H}_2\mathrm{O}
\]
Supposing that air is composed of $1/5$ of oxygen and $4/5$ nitrogen (molar or volume proportions), the molar fractions of hydrogen, oxygen and nitrogen in the considered stoichiometric mixture are thus equal to $2/7$, $1/7$ and $4/7$ respectively.
The mass fractions of these constituents are thus easily deduced from these values:
\[
y_{\mathrm{H}_2} = \frac{2 \,W_{\mathrm{H}_2}}{W_t},\quad
y_{\mathrm{O}_2} = \frac{W_{\mathrm{O}_2}}{W_t},\quad
y_{\mathrm{N}_2} = \frac{4 \,W_{\mathrm{N}_2}}{W_t},\quad
W_t=2 W_{\mathrm{H}_2}+W_{\mathrm{O}_2}+4\,W_{\mathrm{N}_2}
\]
where $W_{\mathrm{H}_2}=0.002$\,Kg, $W_{\mathrm{O}_2}=0.032$\,Kg and $W_{\mathrm{N}_2}=0.028$\,Kg stand for the molar mass of the hydrogen, oxygen and nitrogen molecules respectively.
Since $\mathrm{H}_2$ and $\mathrm{O}_2$ are pure substances (and thus their formation enthalpy is equal to zero), the chemical reaction heat reads:
\[
Q = y_{\mathrm{H}_2\mathrm{O}}\ \Delta H_0^f = (y_{\mathrm{H}_2}+y_{\mathrm{O}_2})\ \Delta H_0^f,
\]
where $\Delta H_0^f=1.3255\,10^7$ J/Kg stands for the formation enthalpy of steam.
The initial pressure is $p=10^5$\,Pa, the initial temperature is $T=283\,^\circ$K, the initial density is given by the Boyle-Mariotte law and the heat capacity ratio is $\gamma_u = \gamma_b = 1.4$.

\medskip
We plot on Figures \ref{fig:uf=32} and \ref{fig:uf=4} the density, velocity and pressure profiles obtained for $u_f=32$\,m/s and $u_f=4$\,m/s respectively.
The solution in the intermediate zone is obtained with a regular mesh, splitting the interval between $5$\,m and the position of the precursor shock (which is unknown up to the last solution step of the algorithm) in $5000$ equal subintervals.
Then we show on Figures \ref{fig:rho_1}-\ref{fig:pressures} the evolution of the states $W_1$, $W_2$ and $W_b$ as a function of the flame velocity.
The temperature in the burnt state, not shown here, is close to $T=3050\,^\circ$K for all the values of the flame velocity $u_f$.
We observe that the precursor shock is of very weak amplitude for low values of the flame velocity; in fact, it becomes visible only when $u_f$ reaches $20$\,m/s.
For $u_f=4$, the computed velocity at state $W_1$ is lower than $10^{-6}$\,m/s, while it reaches values greater than $30$\,m/s at State $W_2$.
Since the ordinary differential equation governing the velocity in the intermediate zone \eqref{eq:ode_u} is of the form
\[
u'= f(\rho,u)\ u,
\]
one may anticipate such a low value as initial (right) condition to lead to severe accuracy problems.
In this respect, the computed value which seems to be the most affected is the velocity at state $W_2$: the convergence value seems to be close to $33.00$\,m/s, we obtain $u_2\simeq 34.5$\,m/s with $n=5\, 10^3$ cells and $u_2 \in (32.95\,\mathrm{m/s},\ 33\,\mathrm{m/s})$ for $n=8\, 10^4$, $n=16\, 10^4$, $n=32\, 10^4$ and $n=64\, 10^4$.
As expected, convergence is easier when the precursor shock has a significant amplitude: $u_2\simeq 243.0$\,m/s for $n=5000$, for a convergence value in the range of $243.8$\,m/s.

\begin{figure}[tb]
\scalebox{0.9}{
\begin{tikzpicture} 
\node[anchor=east] at (10., 0){\includegraphics[scale=0.6]{./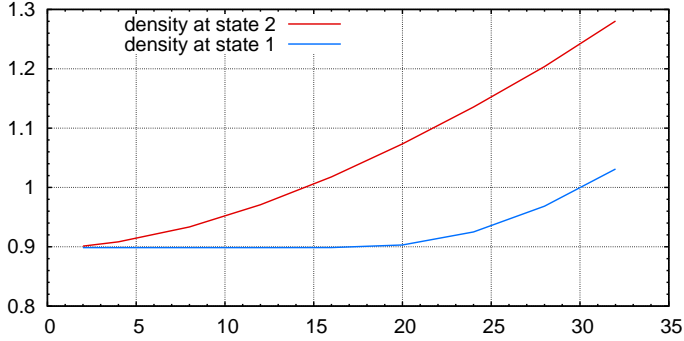}};
\end{tikzpicture}
}
\caption{Density at state 1 and state 2 as a function of the flame velocity.}
\label{fig:rho_1} 
\end{figure}

\begin{figure}[tb]
\scalebox{0.9}{
\begin{tikzpicture} 
\node[anchor=east] at (10., 0){\includegraphics[scale=0.6]{./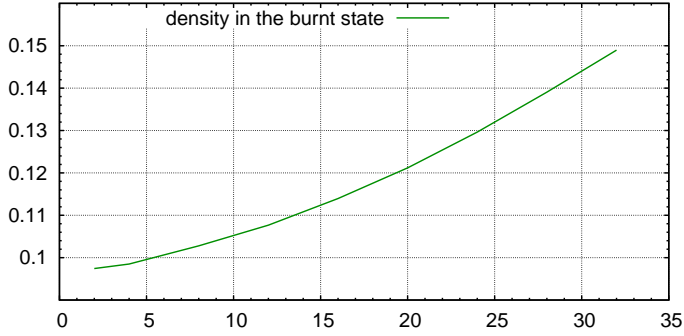}};
\end{tikzpicture}
}
\caption{Density in the burnt zone as a function of the flame velocity.}
\label{fig:rho_2} 
\end{figure}

\begin{figure}[tb]
\scalebox{0.9}{
\begin{tikzpicture} 
\node[anchor=east] at (10., 0){\includegraphics[scale=0.6]{./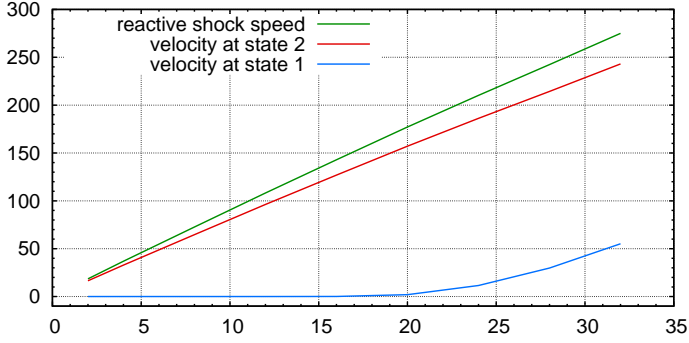}};
\end{tikzpicture}
}
\caption{Velocity at state 1 and state 2 and speed of the precursor shock as a function of the flame velocity.}
\label{fig:velocities} 
\end{figure}

\begin{figure}[tb]
\scalebox{0.9}{
\begin{tikzpicture} 
\node[anchor=east] at (10., 0){\includegraphics[scale=0.6]{./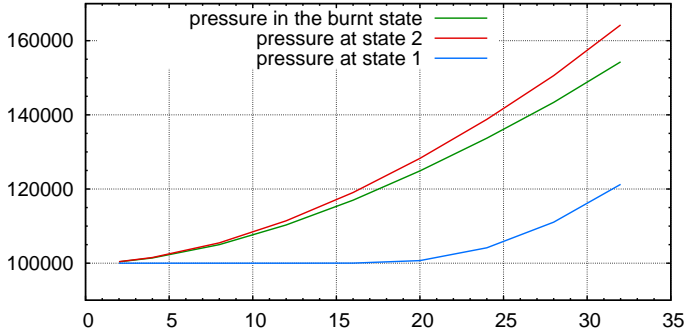}};
\end{tikzpicture}
}
\caption{Pressure at state 1, at state 2 and in the burnt zone as a function of the flame velocity.}
\label{fig:pressures} 
\end{figure}
%
%
\appendix
\ifthenelse{\boolean{@long}}{
\section{Euler equations in spherical coordinates}
The Euler equations read in cartesian coordinates:
\begin{subequations}\label{eq:euler_cart}
\begin{align} &
\partial_t \bar{\rho} + \dive (\bar{\rho} \bar{\bfu}) = 0,
\\ &
\partial_t (\bar{\rho}\bar{\bfu}) + \dive (\bar{\rho} \bar{\bfu} \otimes \bar{\bfu}) + \nabla \bar{p} = 0,
\\ &
\partial_t (\bar{\rho} \bar{E}) + \dive (\bar{\rho} \bar{E} \bar{\bfu} + \bar{p} \bar{\bfu})= 0,
\end{align} \end{subequations}
where $\bar{\rho} = \bar{\rho} (t, \bfx) \in \xR$ the density, $\bar{\bfu}= \bar{\bfu} (t, \bfx) \in \xR^3$ the velocity, $\bar{p} = \bar{p} (t,\bfx) \in \xR$ the pressure and $\bar{E} = \bar{E} (t, \bfx) \in \xR$ the total energy for all $ t \in \xR$ and $\bfx = (x_1, x_2, x_3) \in \xR^3$.
This system is closed by the equation of state, which for a perfect gas, is given by
\begin{equation}\label{eq:eos}
\bar{E}= \frac{1}{2}|\bar{\bfu}|^2 + \bar{e}, \text{ with } \bar{p}  = (\gamma -1) \bar{\rho} \bar{e},
\end{equation}
where $\bar e = \bar e(t, \bfx)$ the internal energy and $\gamma > 1$ the heat capacity ratio.
We suppose that the flow satisfies a spherical symmetry assumption, so the solution of equations \eqref{eq:euler_cart}-\eqref{eq:eos} may be recast as:
\begin{equation}\label{sp}
\bar{\rho}(t,\bfx)=\rho (t,r), \, \bar{p}(t, \bfx)=p(t,r),\, \bar{E}(t,\bfx)=E(t,r) \text{ and } \bar{\bfu}(t,\bfx)=u(t, r) \frac{\bfx}{r},
\end{equation}
with $r = |\bfx|$ and where $(\rho, u, p, E)(t,r) \in \xR^4$ are scalar functions, \ie\ $(\rho, u, p, E) \in \xR^4$.
The aim of this section is to derive the system of equations satisfied by $(\rho, u, p, E)$.
We suppose first that these functions are regular, so we obtain the strong form of the so-called Euler equations in spherical coordinates; then we turn to the weak form, valid for shock solutions.
%
%
\subsection{Regular solutions}
Let us use the notation $\partial _i = \dfrac{\partial}{\partial x_i}$.
We begin by deriving the following three identities:
\begin{itemize}
\item[$(a)$] $\displaystyle \partial _i r = \frac{x_i}{r}$, for $i=1,\ 2,\ 3$.
\item[$(b)$] If $f = f(r)$,  $\displaystyle \dive (f \bar{\bfu}) = \frac{1}{r^2} \partial _r (r^2 f u )$.

\medskip
\item[$(c)$] If $f = f(r)$, $\displaystyle \gradi f = \partial _r f\ \frac{\bfx}{r}$.
\end{itemize}
The first item is a straightforward consequence of the definition $r=|\bfx|$.
For Item $(b)$, we have, thanks to $(a)$,
\begin{eqnarray*}
\dive(f \bar{\bfu}) & = &
\sum_{i=1}^3 \partial_i (f u \dfrac{x_i}{r})
\\ & = &
f u \sum_{i=1}^3 \partial_i  (\dfrac{x_i}{r}) + \sum_{i=1}^3 \dfrac{x_i}{r}\, \partial_i (f u)
\\ & = &
f u \sum_{i=1}^3 (\dfrac{1}{r} - \dfrac{x_i^2}{r^3}) + \sum_{i=1}^3 \dfrac{x_i}{r}\, \partial_ir\ \partial_r(fu)
\\ & = &
\dfrac{1}{r} f u \sum_{i=1}^3 (1 - \dfrac{x_i^2}{r^2}) + \partial_r(fu)\ \sum_{i=1}^3 \dfrac{x_i^2}{r^2}
\\ & = &  
\dfrac{2}{r} fu + \partial_r(f u)
= \dfrac{1}{r^2}  \partial_r (r^2 fu).
\end{eqnarray*}
Item $(c)$ is an immediate consequence of $(a)$.

\medskip
We are now in position to state the following lemma.

\begin{lemma}
Suppose that $(\bar{\rho}, \bar{\bfu}, \bar{p}, \bar{E})$ is solution of \eqref{eq:euler_cart}; then $(\rho, u, p, E)$ satisfies:
\begin{subequations}\label{eq:euler_sph}
\begin{align} \label{eq:euler_sph_mass} &
\partial_t (r^2 \rho ) + \partial_r (r^2 \rho u ) =0,
\\ \label{eq:euler_sph_mom} &
\partial_t (r^2\rho u) +  \partial_r (r^2(\rho u^2+p)) = 2r p,
\\ \label{eq:euler_sph_E} &
\partial _t (r^2\rho E ) + \partial _r (r^2(\rho uE + p u )) = 0,
\end{align}\end{subequations}
with $E= \frac 1 2\, u^2 + e$ and $p=(\gamma -1) \rho e$.
\end{lemma}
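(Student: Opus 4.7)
The plan is to derive each of the three balances separately, in each case rewriting the cartesian equation for $(\bar\rho,\bar\bfu,\bar p,\bar E)$ in terms of the scalar profiles $(\rho,u,p,E)$ using the three identities (a), (b), (c) already established, and then multiplying through by $r^2$ (which is time-independent) to recognize the left-hand side as a divergence in $(t,r)$.

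For the mass balance, $\bar\rho$ is of the form $f(r)=\rho(t,r)$, so identity (b) with $f=\rho$ gives $\dive(\bar\rho\bar\bfu)=r^{-2}\,\partial_r(r^2\rho u)$; since $\partial_t\bar\rho=\partial_t\rho$, multiplying the cartesian mass equation by $r^2$ yields \eqref{eq:euler_sph_mass} directly. The energy balance is entirely analogous: the flux $(\bar\rho\bar E+\bar p)\bar\bfu$ is of the form $f(r)\bar\bfu$ with $f=\rho E+p$, so identity (b) again gives $\dive((\bar\rho\bar E+\bar p)\bar\bfu)=r^{-2}\,\partial_r(r^2(\rho Eu+pu))$, and multiplying by $r^2$ produces \eqref{eq:euler_sph_E}.

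The momentum equation is where the real work lies, since it is vector-valued and must be projected onto the radial direction $\bfx/r$. The time-derivative term projects cleanly, since $\bar\bfu\cdot(\bfx/r)=u$ and $\bfx/r$ is independent of $t$, so $(\bfx/r)\cdot\partial_t(\bar\rho\bar\bfu)=\partial_t(\rho u)$; identity (c) applied to $\bar p=p(t,r)$ gives $(\bfx/r)\cdot\nabla\bar p=\partial_r p$. For the convective term, I would split
\[
\dive(\bar\rho\bar\bfu\otimes\bar\bfu)=\bar\bfu\,\dive(\bar\rho\bar\bfu)+\bar\rho\,(\bar\bfu\cdot\nabla)\bar\bfu,
\]
use (b) for the first piece, and compute $(\bar\bfu\cdot\nabla)\bar\bfu$ componentwise with $\bar u_k=u(t,r)x_k/r$. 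Using $\partial_j u=(\partial_r u)(x_j/r)$ and $\partial_j(x_k/r)=\delta_{jk}/r-x_jx_k/r^3$, the cross terms telescope and one finds $(\bar\bfu\cdot\nabla)\bar\bfu=u\,\partial_r u\,\bfx/r$, so its radial projection is simply $\rho u\,\partial_r u$. Assembling the three terms and multiplying by $r^2$ gives
\[
\partial_t(r^2\rho u)+r^2\rho u\,\partial_r u+u\,\partial_r(r^2\rho u)+r^2\,\partial_r p=0,
\]
after which the first two divergence pieces combine into $\partial_r(r^2\rho u^2)$ via the product rule, and $r^2\partial_r p=\partial_r(r^2 p)-2rp$ generates the inhomogeneous source that ends up on the right-hand side of \eqref{eq:euler_sph_mom}.

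The main obstacle is the computation of $(\bar\bfu\cdot\nabla)\bar\bfu$ under spherical symmetry: one must be careful that the geometric terms coming from $\partial_j(x_k/r)$ cancel against each other, leaving a purely radial vector. Everything else is bookkeeping of total derivatives; the appearance of the source $2rp$ in the momentum equation is just the spherical geometric term hidden inside $r^2\partial_r p$.
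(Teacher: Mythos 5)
Your proof is correct and follows the same overall strategy as the paper: the mass and energy balances are obtained exactly as in the paper by applying identity $(b)$ to $f=\rho$ and to $f=\rho E+p$ respectively, and the momentum equation is handled by projecting onto the radial direction $\bfx/r$ and extracting the source via $r^2\partial_r p=\partial_r(r^2p)-2rp$. The one place where you take a different route is the convective term of the momentum balance: the paper writes $\rho u_i\bfu=f\,x_i\,\bar\bfu$ with $f=\rho u/r$ and uses the identity $\dive(f x_i\bar\bfu)=x_i\dive(f\bar\bfu)+f\,\bar\bfu\cdot\nabla x_i$ together with $(b)$ to land directly on the conservative form $\frac{x_i}{r^3}\partial_r(r^2\rho u^2)$, whereas you pass through the advective decomposition $\dive(\bar\rho\bar\bfu\otimes\bar\bfu)=\bar\bfu\,\dive(\bar\rho\bar\bfu)+\bar\rho(\bar\bfu\cdot\nabla)\bar\bfu$ and verify componentwise that $(\bar\bfu\cdot\nabla)\bar\bfu=u\,\partial_r u\,\bfx/r$. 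Your claimed cancellation of the geometric terms is indeed correct, since $\sum_j \frac{x_j}{r}\bigl(\frac{\delta_{jk}}{r}-\frac{x_jx_k}{r^3}\bigr)=\frac{x_k}{r^2}-\frac{x_k}{r^2}=0$, and the two advective pieces recombine into $\partial_r(r^2\rho u^2)$ after multiplication by $r^2$ exactly as you state. The paper's choice avoids the componentwise computation of the acceleration at the cost of the slightly less transparent auxiliary identity; yours makes the purely radial character of $(\bar\bfu\cdot\nabla)\bar\bfu$ explicit. There is no gap.
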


\begin{proof}
The mass balance equation \eqref{eq:euler_sph_mass} is a straightforward consequence of Item $(b)$.
For the momentum balance equation, we first remark that, for any function $f(r)$, we have:
\[
\dive (f x_i \bar{\bfu}) = \frac{x_i}{r^2} \partial _r (r^2 f u) + \frac{x_i }{r} f u.
\]
Indeed, this relation follows from the development $\dive (f x_i \bar{\bfu}) = x_i \dive (f \bar{\bfu}) + f \bar{\bfu} \cdot \nabla x_i$ thanks to Item $(b)$.
Applying this identity with $f= \displaystyle \frac{\rho u}{r}$, we thus obtain 
\begin{multline*}
\dive(\rho u_i \bfu) =
\dive( \frac{\rho u }{r} x_i \bfu ) = \frac{x_i}{r^2}\, \bigl( \partial _r (\rho u^2 r) + \rho u^2 \bigr)
\\
= \frac{x_i}{r^2}\, \bigl( r\partial _r (\rho u^2) + 2 \rho u^2 \bigr) =  \frac{x_i}{r^3} \partial _r ( r^2 \rho u^2).
\end{multline*}
Thanks to this relation and using $(c)$ for the pressure gradient, for $i=1,\ 2,\ 3$, the $i-$th component of the momentum equation reads:
\[
\dfrac{x_i}{r} \big[ \partial_t (\rho u ) +  \frac{1}{r^2}\partial_r (r^2 \rho u^2)+ \partial_r p \big]= 0.
\]
The three components of this vector balance equation thus boil down to the single relation:
\[
\partial_t (\rho u ) + \frac{1}{r^2}\partial_r (r^2 \rho u^2)+ \partial_r p = 0.
\]
Multiplying by $r^2$, we obtain the conservative form \eqref{eq:euler_sph_mom}.
For the energy balance equation, we first note that $\bar{E} = \frac 1 2\, |\bar{\bfu}|^2 + \bar{e} = \frac 1 2\, u^2 + e = E$.
Then, using once again $(b)$, we get
\[
\dive(\bar{\rho} \bar{E} \bar{\bfu}) = \frac{1}{r^2} \partial_r(\rho E u r^2)
\text{ and } \dive(\bar{p} \bar{\bfu}) = \frac{1}{r^2} \partial_r(p u r^2),
\]
and \eqref{eq:euler_sph_E} follows by adding the time derivative of $(\bar{\rho} \bar{E})$.
\end{proof}

\medskip
We can apply the same process for the entropy balance equation.
In the Cartesian system of coordinates, this relation reads, for regular solutions:
\begin{equation}\label{eq:euler_s}
\partial_t (\bar{\rho} \bar{s}) + \dive (\bar{\rho} \bar{\bfu} \bar{s}) = 0,
\quad \text{with } \bar{s} = \dfrac{\bar{p}}{\bar{\rho}^{\gamma}},
\end{equation}
with $\bar s = \bar s (t,\bfx)$.
By this latter expression, under spherical symmetry assumption, there exists a function $s(t,r)$ such that $\bar s (t,\bfx)= s(t,r)$.
This latter function satisfies
\[
s=\dfrac{p}{\rho^\gamma},
\]
and a straightforward application of Identity $(b)$ with $f=\rho s$ yields, from \eqref{eq:euler_s}:
\begin{equation}\label{eq:euler_sph_s}
\partial_t ( r^2 \rho s) + \partial_r (r^2 \rho s u) =  0.
\end{equation}
%
%
\subsection{Weak solutions}
Let us now treat the case of no classical solution of (\ref{Er}) and for the sake of simplicity we only consider the mass balance equation for a velocity $\bar{\bfu}$ given by (\ref{sp}). So $\bar{\rho}$ is weak solution of $\partial_t \bar{\rho} + \dive (\bar{\rho} \bar{\bfu}) = 0$, if $\forall \bar \varphi \in C_c^{\infty}(\xR_+ \times \xR^d, \xR)$, we have
\begin{equation}\label{weak-sol}
\int_{\xR_+} \int_{\xR} \bar{\rho} \partial_t \bar \varphi + \bar{\rho} \bar{\bfu} \cdot\nabla \bar \varphi \, d\bfx \,dt + \int_{\xR^d} \bar{\rho}_0(\bfx) \bar \varphi(0, \bfx) \, d\bfx = 0,
\end{equation}
with $ \bar{\rho}_0(\bfx)  = \bar{\rho}(0, \bfx).$ 
\begin{lemma}
Let $\Omega$ a open set of $\xR_+^*$ et let's consider $\bar{\rho}$, given by (\ref{sp}), a weak solution of (\ref{weak-sol}) in the form
\begin{equation}\label{choc-sol}
\rho(t, r) = 
\begin{cases}
\rho_L \qquad \text{ if } r \in \Omega^{-} \\
\rho_R \qquad \text{ if } r \in \Omega^{+}
\end{cases}
\text{ with }
u(t, r) = 
\begin{cases}
u_L \qquad \text{ if } r \in \Omega^{-} \\
u_R \qquad \text{ if } r \in \Omega^{+}
\end{cases}
\end{equation}
where $\Omega^{-} = \lbrace r \in \Omega, r \leq t \sigma\rbrace $ and $\Omega^{+} = \lbrace r \in \Omega, r > t \sigma\rbrace$ with $(\rho_R, u_R)$ and  $(\rho_L, u_L)$ are constant in the domains $\xR_+ \times \Omega^{+}$ and $\xR_+ \times \Omega^{-}$ respectively. \\
Let $ \bar \varphi \in C_c^{\infty}(\xR_+ \times \xR^d, \xR)$ and $\varphi \in C_c^{\infty}(\xR_+ \times \xR^*_+, \xR)$ such that $\bar \varphi(t, \bfx) = \varphi(t, r)$, then $\rho$ satisfies the following weak formulation 
\begin{equation}\label{weak-sol-rad}
\int_{\xR_+} \int_{\xR_+^*} (r^2 \rho \partial_t \varphi + r^2 \rho u \partial_r \varphi )\,dr \, dt + \int_{\xR_+^*} r^2\rho_0(r)  \varphi(0, r) \, dr  = 0.
\end{equation}
Furthermore $\sigma$ checks the relationship: 
\begin{equation}\label{cond-R.H}
\sigma (\rho_R - \rho_L) = (\rho_R u_R - \rho_Lu_L).
\end{equation}
\end{lemma}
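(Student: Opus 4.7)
The statement packages two essentially independent claims: (i) the radial weak formulation (\ref{weak-sol-rad}) is a consequence of the Cartesian weak formulation (\ref{weak-sol}) under spherical symmetry; (ii) the piecewise-constant ansatz (\ref{choc-sol}) forces the jump relation (\ref{cond-R.H}). I will treat them in that order, using only the identities $(a)$, $(b)$, $(c)$ established at the start of the appendix and the spherical-coordinates change of variable.

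For (i), the idea is to restrict the Cartesian weak formulation to radially symmetric test functions. Pick an arbitrary $\varphi \in C_c^\infty(\xR_+\times\xR_+^*,\xR)$ and define $\bar\varphi(t,\bfx)=\varphi(t,r)$; then $\bar\varphi\in C_c^\infty(\xR_+\times\xR^3,\xR)$. By Item $(c)$ of the appendix, $\gradi\bar\varphi=\partial_r\varphi\,\bfx/r$, and since $\bar\bfu=u\,\bfx/r$ with $|\bfx/r|^2=1$, one gets $\bar\bfu\cdot\gradi\bar\varphi=u\,\partial_r\varphi$. Rewriting the spatial integrals in (\ref{weak-sol}) in spherical coordinates with $d\bfx = r^2\sin\theta\,dr\,d\theta\,d\phi$, the angular integration produces a global factor $4\pi$ which divides out. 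The resulting identity is (\ref{weak-sol-rad}). The arbitrariness of $\varphi$ is automatic since any $\varphi\in C_c^\infty(\xR_+\times\xR_+^*,\xR)$ can be extended to such a $\bar\varphi$.

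For (ii), take $\varphi\in C_c^\infty(\xR_+\times\xR_+^*,\xR)$ whose support lies in $\Omega$ and avoids $\{t=0\}$, so that the initial-data term in (\ref{weak-sol-rad}) disappears. Split the double integral into the contributions on $\Omega^-$ and $\Omega^+$, on each of which $\rho$ and $u$ are constants. Apply the two-dimensional divergence theorem in the $(t,r)$-plane to each piece to move the $t$- and $r$-derivatives off $\varphi$; the interior boundary of each piece inside the support of $\varphi$ is the shock line $r=\sigma t$, whose outward unit conormal for $\Omega^-$ is proportional to $(-\sigma,1)$ and whose arc-length element is $\sqrt{1+\sigma^2}\,dt$. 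After the $\sqrt{1+\sigma^2}$ factors cancel, the surface contribution reads $\int r^2\bigl[(u_L-\sigma)\rho_L-(u_R-\sigma)\rho_R\bigr]\varphi(t,\sigma t)\,dt$.

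The main obstacle is that, when derivatives are transferred in the interior of either region, one also picks up the geometric source $2r\rho u$ arising from $\partial_r(r^2\rho u)$: unlike the planar one-dimensional case, a state that is constant in $(\rho,u)$ is \emph{not} a classical solution of the spherical mass balance. To isolate the pure jump relation, I localize the test function: choose $\varphi$ to be a normalized bump of diameter $\varepsilon$ centered at an arbitrary point $(t_0,\sigma t_0)$ on the shock. Then the interior volumetric term scales like $O(\varepsilon^2)$ in the $(t,r)$-plane while the line integral on the shock scales like $O(\varepsilon)$; after normalization and letting $\varepsilon\to 0$ the interior term drops out, and the remaining line integral yields $\rho_L(u_L-\sigma)=\rho_R(u_R-\sigma)$ pointwise on the shock (using $r^2>0$). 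Rearranging gives (\ref{cond-R.H}).
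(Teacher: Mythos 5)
Your proof is correct. For the first claim (passing from the Cartesian weak formulation to the radial one) you do exactly what the paper does: restrict to radially symmetric test functions, use $\bar\bfu\cdot\gradi\bar\varphi=u\,\partial_r\varphi$ and the spherical change of variables; your explicit remark that the angular factor $4\pi$ cancels is a small tidying-up of the paper's computation. For the jump relation, however, you go genuinely further than the paper, which after splitting the integral over $\Omega^{+}$ and $\Omega^{-}$ simply asserts ``as in the scalar case, we can show that $\sigma$ satisfies the Rankine--Hugoniot relationship'' (and even leaves spurious $r^2$ factors in its final displayed identity). You correctly identify the obstruction to quoting the planar case verbatim: because the radial fluxes are $r^2\rho$ and $r^2\rho u$, a constant state is not a classical solution in each region, and integrating by parts leaves a residual volumetric term $2r\rho u\,\varphi$. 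Your fix --- localizing the test function in an $\varepsilon$-neighbourhood of a point of the shock line, normalizing so the line integral is $O(1)$, and noting that the volumetric contribution is then $O(\varepsilon)$ --- is the standard and correct way to extract the pointwise jump condition $\rho_L(u_L-\sigma)=\rho_R(u_R-\sigma)$, and the sign and normalization of the conormal $(-\sigma,1)/\sqrt{1+\sigma^2}$ are handled properly. In short, your argument supplies a rigorous justification for the step the paper leaves implicit, at the cost of a slightly longer proof; the paper's version buys brevity by delegating to the reader's familiarity with the one-dimensional scalar computation.
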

\begin{proof}
Let $ \bar \varphi \in C_c^{\infty}(\xR_+ \times \xR^d, \xR)$ and let introduce the function $\varphi$ such that $\bar \varphi(t, \bfx) = \varphi(t, r)$, then $\varphi \in C_c^{\infty}(\xR_+ \times \xR_+^*, \xR)$. 
Therefore
\begin{align*}
\int_{\xR_+} \int_{\xR^d} \bar{\rho} (\partial_t  \bar \varphi &+ \bar{\rho} \bar{\bfu} \cdot \nabla \bar \varphi) \, d\bfx \,dt 
\\
& = \int_{\xR_+} \int_{\xR_+} ( \rho(t, r) \partial_t \varphi(t,r) + \rho(t,r) u(t, r) \dfrac{\bfx}{r}  \cdot \nabla \varphi(t,r) ) r^2 \,dr \,dt \\   &= \int_{\xR_+} \int_{\xR_+} (r^2 \rho(t,r) \partial_t \varphi(t, r) + r^2 \rho(t,r) u(t, r) \dfrac{\bfx}{r} \cdot \dfrac{\bfx}{r} \partial_r \varphi(t, r))  \,dr \,dt \\
  &=  \int_{\xR_+} \int_{\xR_+} (r^2 \rho(t,r) \partial_t \varphi(t, r) + r^2 \rho(r,t) u(t, r)  \partial_r \varphi(t, r))  \,dr \,dt.
\end{align*}
We have also that
\[
\int_{\xR^d} \bar{\rho}_0(\bfx) \bar \varphi(0, \bfx) \, d\bfx = \int_{\xR_+} r^2 \rho_0(r)  \varphi(0, r) \, dr
\]
Thus (\ref{weak-sol}) reads: 
\begin{multline}
\int_{\xR_+} \!\int_{\xR_+} \!(r^2 \rho(t,r) \partial_t \varphi(t, r) + r^2 \rho(r,t) u(t, r)  \partial_r \varphi(t, r))  \ dr \ dt + \int_{\xR_+}\! r^2 \rho_0(r)  \varphi(0, r) \ dr = 0, \\ \,\forall \varphi \in C_c^{\infty}(\xR_+ \times \xR_+, \xR).
\end{multline}
According to the definition (\ref{choc-sol}) and decomposing the integral into space on $\Omega^{+}$ and $\Omega^{-}$, we obtain that
\begin{equation*}
\int_{\xR_+} \int_{\Omega^{+}} r^2\rho_R \partial_t \varphi +r^2 \rho_R u_R \partial_r \varphi \,dr\,dt + \int_{\xR_+} \int_{\Omega^{-}} r^2\rho_L \partial_t \varphi + r^2\rho_L u_L \partial_r \varphi \,dr\,dt = 0.
\end{equation*}
Thus as in the scalar case, we can show that, $\sigma$ satisfies the Rankine Hugoniot relationship
\[
\sigma (r^2 (\rho_R - \rho_L)) = (r^2(\rho_R u_R - \rho_Lu_L)).
\]
The lemma is thus proved.
\end{proof}
%
%
\section{Left state of a shock as a function of the right state and the shock velocity}
In this section, we recall a classical computation which consists in determining the left state of a shock as a function of the right state and the shock velocity.

\medskip
\begin{lemma} \label{lmm:usual_shock}
Let $W=(\rho,u,p)$ be the left state of a shock travelling at the given speed $\sigma$.
Let $W_R=(\rho_R,u_R,p_R)$ be the given right state, which is supposed to satisfy $u_R=0$.
Let $c_R$ be the speed of sound in the right state,\ie\ $c_R^2=\gamma p_R/c_R$, and let $M$ be the Mach number associated to the incident shock, defined by $M=\sigma/c_R$.
Then $W$ is given by:
\begin{subequations}\label{eq:shock}
\begin{align} \label{eq:sh_rho} &
\rho= \frac{\gamma +1}{\gamma -1+\dfrac 2 {M^2}}\ \rho_R,
\\ \label{eq:sh_u} &
u=(1-\frac{\rho_R}\rho)\, \sigma,
\\ \label{eq:sh_p} &
p=p_R+(1-\frac{\rho_R}\rho)\rho_R\,\sigma^2.
\end{align} \end{subequations}
\end{lemma}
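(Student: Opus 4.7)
The approach is to write the three Rankine--Hugoniot jump conditions associated with the Euler system, specialize to $u_R=0$, and combine them with the ideal gas law.

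I would first write the jumps of mass, momentum and energy as
\[
\sigma(\rho_R-\rho)=-\rho u, \qquad -\sigma \rho u =p_R-\rho u^2-p, \qquad \sigma(\rho_R E_R - \rho E)=-(\rho E + p)u,
\]
where $u_R=0$ has already been used. The mass relation immediately gives $\rho u = \sigma(\rho-\rho_R)$, which is relation (b). Plugging (b) into the momentum relation and noting that $\sigma-u = \sigma\rho_R/\rho$ yields (c) directly, since
\[
p - p_R = \rho u(\sigma - u) = \sigma(\rho-\rho_R)\,\frac{\sigma\rho_R}{\rho} = \sigma^2 \rho_R\Bigl(1-\frac{\rho_R}{\rho}\Bigr).
\]

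Relation (a) requires a little more work. I would expand the energy balance using $E=\frac12 u^2 + e$ with $e=p/((\gamma-1)\rho)$, so that $\rho E = \frac12 \rho u^2 + p/(\gamma-1)$, and substitute (b) and (c). After simplification, the compression ratio $\eta=\rho/\rho_R$ must satisfy the classical ideal-gas Hugoniot relation
\[
\frac{p}{p_R} = \frac{(\gamma+1)\eta-(\gamma-1)}{(\gamma+1)-(\gamma-1)\eta}.
\]
Eliminating $p/p_R$ between this relation and (c) rewritten as $p/p_R = 1 + \gamma M^2 (1 - 1/\eta)$, using $c_R^2=\gamma p_R/\rho_R$ and $M=\sigma/c_R$, produces a linear equation for $\eta$ whose solution reproduces exactly (a).

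The main technical difficulty is the bookkeeping in the energy equation, where several terms in $\rho$ and $u$ must cancel. To limit the risk of algebraic errors, I would switch to the constant mass flux $m=\rho(\sigma-u)=\sigma\rho_R$ across the shock: in these variables the momentum jump reads simply $p-p_R=mu$, and the energy jump reduces at once to the standard Hugoniot form, bypassing the brute-force expansion.
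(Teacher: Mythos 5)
Your proposal is correct and follows essentially the same route as the paper: write the three Rankine--Hugoniot conditions with $u_R=0$, read off $u$ and $p$ from the mass and momentum jumps, and reduce the energy jump to a linear equation for the density ratio after discarding the trivial root $\rho=\rho_R$. The only organizational difference is that you factor the energy condition through the classical Hugoniot adiabat $p/p_R=\bigl((\gamma+1)\eta-(\gamma-1)\bigr)/\bigl((\gamma+1)-(\gamma-1)\eta\bigr)$ before combining it with the pressure jump, whereas the paper works in the shock frame throughout, substitutes $p$ and $u-\sigma$ directly into the energy jump and simplifies by $\sigma$ and by $1-\rho_R/\rho$; both reach the same linear equation for $\rho_R/\rho$.
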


\begin{proof}
We first change the coordinate system, in such a way that the shock is steady in the new coordinate system.
The density and the pressure are left unchanged, while the velocity is now $u-\sigma$ and $-\sigma$ in the left and right state respectively.
In this coordinate system, the Rankine-Hugoniot conditions imply that the jump of the fluxes vanishes, which reads for the Euler equations:
\begin{subequations}\label{eq:RHs}
\begin{align} \label{eq:RHs_mass} &
\rho\,(u-\sigma)=\rho_R\,(-\sigma),
\\[1ex] \label{eq:RHs_mom} &
\rho\,(u-\sigma)^2 + p = \rho_R\,(-\sigma)^2 + p_R,
\\[1ex] \label{eq:RHs_e} &
\frac 1 2 \rho\, (u-\sigma)^3 + \rho e\,(u-\sigma) + p \,(u-\sigma) = \frac 1 2 \rho_R\, (-\sigma)^3 + \rho_R e_R\,(-\sigma) + p_R \,(-\sigma).
\end{align}\end{subequations}
This system must be complemented by the equation of state $p=(\gamma -1)\rho e$ and $p_R=(\gamma -1)\rho_R e_R$.
Thanks to this relation, we may recast \eqref{eq:RHs_e} as:
\begin{equation} \label{eq:RHs_ed}
\rho\,(\sigma-u)^3+\xi\,p\,(\sigma-u)=\rho_R \sigma^3+\xi\,\sigma\, p_R,\quad \xi=\frac{2\gamma}{\gamma -1}.
\end{equation}
Relation \eqref{eq:RHs_mom} reads
\[
\frac 1 \rho \bigl[\rho\,(u-\sigma)\bigr]^2 + p = \rho_R\,(-\sigma)^2 + p_R,
\]
so, using \eqref{eq:RHs_mass}:
\[
\frac 1 \rho (\rho_R\, \sigma)^2 + p = \rho_R\,\sigma^2 + p_R.
\]
We thus obtain $p$ as a function of known quantities (\ie\ $\sigma$ and the right state) and $\rho$ only:
\begin{equation}\label{eq:sh_pd}
p = p_R + (1 - \frac{\rho_R} \rho)\,\sigma^2.
\end{equation}
We now notice that substituting, in the jump condition associated to the energy balance \eqref{eq:RHs_ed}, this expression for $p$ and $(\rho_R/\rho)\,\sigma$ for $(u-\sigma)$, thanks once again to Equation \eqref{eq:RHs_mass}, we get an equation for $\rho$ only:
\[
\rho\,\bigl(\frac{\rho_R} \rho\,\sigma\bigr)^3+\xi\, \Bigl( p_R + (1 - \frac{\rho_R} \rho)\,\sigma^2 \Bigr)\, \bigl(\frac{\rho_R} \rho\,\sigma\bigr)
=\rho_R \sigma^3+\xi\,\sigma.
\]
If $\sigma=0$, the first jump condition implies $u=0$ (excluding $\rho=0$), then the second one yields $p=P_R$ and the third one is automatically satisfied: the considered discontinuity is a (stationary) contact.
In such a case, the right state remains partially undermined by the jump conditions: $\rho$ and $e$ may take any value satisfying $(\gamma -1)\rho e =p_R$.
If we only consider a shock, $\sigma \neq 0$ and the last relation may be simplified by $\sigma$.
Reordering, we get:
\[
\rho_R\,\sigma^2 \Bigl((\frac{\rho_R}\rho)^2 -1 \Bigr)+\xi\,p_R\,(\frac{\rho_R}\rho -1) - \xi \rho_R\,\sigma^2 \frac{\rho_R}\rho (\frac{\rho_R}\rho -1)=0.
\]
The case $\rho=\rho_R$ has no interest: it yields, by the first jump condition, $u=0$, and the second one implies that $p=p_R$, which means {\em in fine} that $W=W_R$, \ie\ that there is no discontinuity at all.
We may thus simplify by $1-\rho_R/\rho$, to obtain a linear equation for the ratio $\rho_R/\rho$.
Solving this latter equation, we obtain:
\[
\rho=\frac{\gamma +1}{\gamma -1+\dfrac 2 {M^2}}\ \rho_R,\quad \mbox{with } M=\frac \sigma {c_R},\ c_R^2=\gamma \frac{p_R}{\rho_R}.
\]
We thus obtain \eqref{eq:sh_rho}.
Relation \eqref{eq:sh_u} is a straightforward consequence of \eqref{eq:RHs_mass} and \eqref{eq:sh_p} was already proven (Relation \eqref{eq:sh_pd} below).
The proof is thus complete.\\
\end{proof}

\medskip
For a 3-shock, entropy conditions requires that $\sigma > c_R$, which is equivalent to $M>1$.
We thus have $\rho > p_R$, $p > p_R$ and $0 < u < \sigma$.

\medskip
It is worth noting that it is now easy to relax the assumption $u_R=0$ of Lemma \ref{lmm:usual_shock}.
Indeed, for the general case, we may work in the system of coordinates in translation at the velocity $u_R$ with respect to the initial one: in the new coordinate system, the right state is now at rest and Lemma \ref{lmm:usual_shock} applies, replacing $\sigma$ by $\sigma -u_R$ and $u$ by $u-u_R$ in the definition of the Mach number $M$ and in the system of equations \eqref{eq:shock}.
}
{}
%
%
\section{A relation satisfied by the right state of a shock when the left state is at rest}\label{sec:RS}
In this section, we perform a technical computation motivated by the following arguments.
Let us suppose that a shock travelling at the speed $\sigma_r$ separates a left state denoted by $W_b=(\rho_b,u_b,p_b)$ and a right state denoted by $W_2=(\rho_2,u_2,p_2)$, and that $u_b=0$.
The Rankine-Hugoniot conditions yield 3 independent equations, and thus constitute a system in which $\rho_b$ and $p_b$ may be eliminated, to obtain a relation linking $W_2$ and $\sigma_r$ only.
It is this relation that we now derive, supposing that the following specific constitutive relations hold for both states:
\begin{equation} \label{app:eos}
E_b=\frac 1 2 u_b^2 + e_b +Q,\ p_b=(\gamma_b-1)\,\rho_b\, e_b,\quad
E_2=\frac 1 2 u_2^2 + e_2,\ p_2=(\gamma_u-1)\,\rho_2\, e_2.
\end{equation}
Note that eliminating $\rho_b$ and $p_b$ consists in establishing an expression of these quantities (and thus of $W_b$, since $u_b=0$) as a function of $W_2$ and $\sigma_r$.
All of these relations, \ie\ the equation linking $W_2$ and $\sigma_r$ and the expression of $W_b$ as a function of these variables, are gathered in the following lemma.

\medskip
\begin{lemma}[Some conditions at the reactive shock]
The state $W_2$ and the shock speed $\sigma_r$ satisfy the following relation
\begin{equation}\label{eq:app_etat-2}
\frac12 u_2^2 + \frac 1 {\gamma_b -1} u_2 \sigma_r + \bigl( \frac{\gamma_u}{\gamma_u -1} - \frac{\gamma_b}{\gamma_b -1} \dfrac{\sigma_r}{\sigma_r - u_2} \bigr) \dfrac{p_2}{\rho_2} + Q =0.
\end{equation}
In addition, $W_b$ is given as a function of $W_2$ and $\sigma_r$ by:
\begin{equation} \label{eq:app_etatb}
\rho_b = \rho_2 (\dfrac{\sigma_r - u_2}{\sigma_r}),\quad
p_b = p_2 - \rho_2 u_2 (\sigma_r - u_2).
\end{equation}
\end{lemma}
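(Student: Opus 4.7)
The strategy is the standard elimination of the inner shock state from the three Rankine--Hugoniot jump conditions, adapted to the fact that the expression of the total energy differs on the two sides of the reactive shock. I work in the lab frame with the shock moving at speed $\sigma_r$ and use the ideal-gas relations \eqref{app:eos}. The three jump conditions (mass, momentum, energy) at the reactive shock read, using $u_b=0$:
\begin{subequations}
\begin{align}
& -\rho_b\,\sigma_r = \rho_2\,(u_2-\sigma_r),\\
& \rho_b\,\sigma_r^{\,2}+p_b = \rho_2\,(u_2-\sigma_r)^2 + p_2,\\
& \sigma_r\bigl(\rho_2 E_2 - \rho_b E_b\bigr) = (\rho_2 E_2 + p_2)\, u_2.
\end{align}
\end{subequations}

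First I would use the mass equation to read off $\rho_b=\rho_2(\sigma_r-u_2)/\sigma_r$, which is the first relation of \eqref{eq:app_etatb}. Inserting this in the momentum equation and using $\rho_b\sigma_r^2=\rho_2\sigma_r(\sigma_r-u_2)$ to simplify, I would obtain
\[
p_b = p_2 + \rho_2(\sigma_r-u_2)^{2} - \rho_2\sigma_r(\sigma_r-u_2) = p_2 - \rho_2\,u_2\,(\sigma_r-u_2),
\]
which is the second relation of \eqref{eq:app_etatb}.

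Next I would feed these expressions for $\rho_b$ and $p_b$, together with $e_b=p_b/((\gamma_b-1)\rho_b)$ and $e_2=p_2/((\gamma_u-1)\rho_2)$, into the energy jump condition. Using mass again in the form $\rho_2(\sigma_r-u_2)=\rho_b\sigma_r$, the energy relation rewrites as
\[
E_2 - E_b \;=\; \frac{p_2\,u_2}{\rho_b\,\sigma_r},\qquad E_2=\tfrac12 u_2^2+e_2,\quad E_b=e_b-Q,
\]
i.e.\ $\tfrac12 u_2^2 + e_2 - e_b + Q - p_2 u_2/(\rho_b\sigma_r)=0$. After substitution of $\rho_b$ and $p_b$, the term $p_b/((\gamma_b-1)\rho_b)$ splits into $\sigma_r p_2/[(\gamma_b-1)\rho_2(\sigma_r-u_2)] - \sigma_r u_2/(\gamma_b-1)$, and the residual terms carrying $p_2/\rho_2$ collapse to
\[
\frac{p_2}{\rho_2}\left[\frac{1}{\gamma_u-1}-\frac{\sigma_r+(\gamma_b-1)u_2}{(\gamma_b-1)(\sigma_r-u_2)}\right].
\]
The key algebraic step, which is where I expect a small trick to be needed, is the identity
\[
\sigma_r+(\gamma_b-1)u_2 \;=\; \gamma_b\,\sigma_r - (\gamma_b-1)(\sigma_r-u_2),
\]
which turns the bracket into $\dfrac{\gamma_u}{\gamma_u-1}-\dfrac{\gamma_b}{\gamma_b-1}\,\dfrac{\sigma_r}{\sigma_r-u_2}$ and yields exactly \eqref{eq:app_etat-2}.

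The calculation is entirely elementary; the only point requiring care is the bookkeeping of the $-Q$ contribution in $E_b$ (so that $+Q$ appears with the correct sign in the final relation) and the rewriting trick above that recombines the two $\gamma$-dependent terms into the symmetric form displayed in the lemma. No further ingredient is needed.
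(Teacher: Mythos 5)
Your proof is correct and follows essentially the same route as the paper: read off $\rho_b$ and $p_b$ from the mass and momentum jump conditions, then eliminate them from the energy jump condition after dividing by the mass flux; the only (cosmetic) difference is that the paper carries out the computation in the frame in which the shock is at rest, whereas you stay in the laboratory frame, and your algebra checks out. Note that your sign convention $E_b=e_b-Q$ is the one consistent with \eqref{eq:energies} and is indeed what produces the $+Q$ in \eqref{eq:app_etat-2} (the ``$+Q$'' in \eqref{app:eos} appears to be a typo in the paper).
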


\begin{proof}
Using the standard change of coordinates to work in the coordinate system in which the shock is at rest (see \eg\ \cite{god-96-num}), the Rankine-Hugoniot relationships (which boil down to the fact that the jump of the fluxes vanishes) through the shock for the mass and momentum balance equations read respectively:
\[
\rho_b\, \sigma_r = \rho_2\, (\sigma_r - u_2),\quad \rho_b\, \sigma_r^2 + p_b = \rho_2 (\sigma_r - u_2)^2 + p_2.
\]
These two relations readily yields the expression \eqref{eq:app_etatb} of $W_b$ as a function of $W_2$ and $\sigma_r$ that we are looking for:
\[
\rho_b = \rho_2\, (\dfrac{\sigma_r - u_2}{\sigma_r}) \text{ and }  p_b = p_2 - \rho_2\, u_2\, (\sigma_r - u_2).
\]
Let us now write the Rankine-Hugoniot condition for the conservation equation of the total energy:
\begin{equation}\label{re}
\rho_b\, \sigma_r\, E_b + \sigma_r\, p_b = \rho_2\, (\sigma_r -u_2)\, E_2 + (\sigma_r - u_2)\, p_2.
\end{equation}
We may divide the left-hand side of his relation by $\rho_b \sigma_r$ and the right-hand side by $\rho_2 (\sigma_r - u_2)$ (since these two expressions are equal by the jump condition associated to the mass balance equation), to obtain:
\[
E_b +  \dfrac{p_b}{\rho_b} = E_2 + \dfrac{p_2}{\rho_2}.
\]
Using the constitutive relations \eqref{app:eos} and the expression \eqref{eq:app_etatb} of $\rho_b$ and $p_b$ in this equation yields \eqref{eq:app_etat-2} and thus concludes the proof.\\
\end{proof}
%
%
\ifthenelse{\boolean{@long}}{
\section{A technical lemma}\label{sec:tl}
We prove in this section the technical lemma \ref{lmm:tl}, which we first recall.

\smallskip
\begin{lemma}
Let $h$ be a continuously differentiable real function, let us suppose that there exists $a > 0$ such that $h(a) > 0$, and that $h$ satisfies the property $h'(x) \leq 0$ if $h(x) > 0$.
Then $h(x) \geq h(a)$, for all $x < a$.
\end{lemma}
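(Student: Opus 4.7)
The plan is to reduce the claim to monotonicity on a maximal subinterval $(b,a]$ where $h$ is strictly positive, and then to use continuity to rule out the possibility that this interval is not all of $(-\infty, a]$.

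\textbf{Step 1: Set up the maximal positivity interval.} I would define
\[
b = \inf \bigl\{ x \le a \ :\ h(y) > 0 \text{ for all } y \in [x, a] \bigr\}.
\]
Since $h(a) > 0$ and $h$ is continuous, this infimum is well defined and strictly less than $a$. By construction, $h(y) > 0$ for every $y \in (b, a]$.

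\textbf{Step 2: Monotonicity on $(b,a]$.} On $(b,a]$ the hypothesis gives $h'(y) \le 0$, so $h$ is non-increasing on this interval (via the mean value theorem applied to any subinterval $[y_1,y_2] \subset (b,a]$ with $y_1 \le y_2$). Consequently, for every $x \in (b,a]$ we have $h(x) \ge h(a)$, which is already the desired inequality on that subinterval.

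\textbf{Step 3: Rule out $b > -\infty$.} Suppose for contradiction that $b$ is finite. By Step 2 and continuity of $h$, letting $x \to b^+$ gives $h(b) \ge h(a) > 0$. But then continuity of $h$ at $b$ provides an open neighborhood of $b$ on which $h$ remains positive, so $h > 0$ on some interval $[b-\eta, a]$ with $\eta > 0$, contradicting the definition of $b$ as the infimum. Hence $b = -\infty$, and the inequality $h(x) \ge h(a)$ extends to all $x \le a$.

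The only delicate point is Step 3, where one must be careful that the strict positivity obtained in the limit, combined with continuity, genuinely contradicts the infimum; but this is standard and requires nothing beyond the given $C^1$ regularity. No additional structure on $h$ is needed.
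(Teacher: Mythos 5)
Your proof is correct, and it takes a genuinely more direct route than the paper's. The paper argues globally by contradiction: assuming $h(b) < h(a)$ for some $b<a$, it invokes the mean value theorem to produce a point $c$ with $h'(c)>0$, deduces $h(c)\le 0$ from the hypothesis, locates by the intermediate value theorem the largest zero $\underline d$ of $h$ below $a$, and then integrates $h'$ over $[\underline d,a]$ (where $h>0$ and hence $h'\le 0$) to reach the contradiction $0 < h(a)-h(\underline d) \le 0$. Your $b$ plays exactly the role of the paper's $\underline d$ (when finite the two coincide), but you reach it directly as the infimum of the maximal positivity interval ending at $a$, obtain the monotonicity of $h$ on $(b,a]$ --- and hence the desired inequality there --- as an immediate consequence, and only then use a short local contradiction to push $b$ to $-\infty$. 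What your version buys is economy: the conclusion $h(x)\ge h(a)$ is established constructively on $(b,a]$ before any contradiction is needed, and you never have to manufacture a zero of $h$ via the mean value theorem. The one point to keep in mind is that your Step 3 assumes $h$ is defined on a left neighbourhood of $b$; in the paper's application $h$ lives on $\xR_+$ and the relevant conclusion is only needed down to $\sigma_\ell>0$, so either $b=-\infty$ is replaced by ``$b$ is the left endpoint of the domain'' or one simply stops at Step 2, which already gives the inequality on the whole positivity interval. This is a cosmetic adjustment, not a gap.
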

\begin{proof}
The result is proven by contradiction: assume that there exists $b < a$ such that $h(b) < h(a)$.
Then by the mean value theorem, there exists $c \in (b,a)$ such that 
\[
h'(c) = \frac{h(a)-h(b)}{a-b}>0.
\]
Hence, thanks to the assumption of the lemma, $h(c) \le 0$. 
By continuity, there exists $d \in [c,a]$ such that $h(d) =0$.
Let $\underline d = \max \{d  \in \xR \ : \ d < a  \mbox{ and } h(d) = 0\}$; then
\[
\int_{\underline d}^a h'(t) dt = h(a) - h(\underline d) \le 0
\]
which leads to a contradiction, since $h(a) >0$ and $h(\underline d) = 0$.
\end{proof}
}
{}
%
%
\section{Variations of $\mathcal F_r$ when $\gamma_b=\gamma_u$}\label{sec:fr}
In this appendix, we prove that the function $\mathcal F_r$ defined by \eqref{eq:fr} is increasing over $(\sigma_\ell,\sigma_p)$ if the heat capacity ratios $\gamma_b$ and $\gamma_u$ are equal.
Let us denote by $\gamma$ the common heat capacity ratio, and recall the expression of $\mathcal F_r$:
\begin{equation}\label{eq:fr}
\mathcal F_r(x)=
\frac12 u(x)^2 + \frac 1 {\gamma -1} x\,u(x) - \frac \gamma {\gamma -1}\, \dfrac{u(x)}{x - u(x)}\, \dfrac{p(x)}{\rho(x)} + Q.
\end{equation}
Using $\gamma p/\rho=c^2$, we have
\begin{multline*} 
\mathcal F_r'(x) = u(x) u'(x) + \frac x {\gamma \!-\!1} u'(x) +  \frac 1 {\gamma \!-\!1} u(x)
\\
-\frac 1 {\gamma \!-\!1}\, \frac{x u'(x) - u(x)}{(x - u(x))^2}  c^2(x)
-\frac 1 {\gamma \!-\!1} \, \frac {u(x)} {x - u(x)}  (c^2)'(x),
\end{multline*}
with 
\[
(c^2)' = s_1 \gamma (\rho^{\gamma -1})' = s_1 \gamma (\gamma-1)\, \rho^{\gamma -2} \rho'= (\gamma -1)\, c^2\, \dfrac{\rho'} \rho.
\]
So we get that $\mathcal F_r'(x)=T_1(x)+T_2(x)+T_3(x)$ with
\[\begin{array}{l}\displaystyle
T_1(x)= \frac 1 {\gamma -1} \bigl(1 + \frac{c^2(x)}{(x - u(x))^2} \bigr) u(x),
\\[2ex] \displaystyle
T_2(x)= \Bigl(u(x) +\frac x {\gamma -1} - \dfrac 1 {\gamma-1} \frac{ x c^2(x)}{(x - u(x))^2} \Bigr) u'(x),
\\[2ex] \displaystyle
T_3(x)=- \frac {u(x) \, c^2(x)} {(x - u(x))\,\rho}\,\rho'.
\end{array}\]
Since $u>0$ by Lemma \ref{lmm:var}, $T_1 >0$.
In addition, in the proof of the same lemma \ref{lmm:var}, we showed that $u(x)+c(x)-x >0$, so that $c(x) > x-u(x)$ and
\[
u(x) +\frac x{\gamma -1}(1 - \dfrac{  c^2(x)}{(x - u(x))^2})  \leq  u(x).
\]
Since $u'\leq 0$, this implies that $T_2 \geq u u'$.
Replacing $\rho'$ and $u'$ by their expressions given in \eqref{eq:ode}, we obtain that $u u' + T_3=0$, which concludes the proof.

\bibliographystyle{plain}
\bibliography{sedov}
\end{document}